\newtheorem{theorem}{Theorem}[section]
\newtheorem{definition}[theorem]{Definition}
\newtheorem{proposition}[theorem]{Proposition}
\newtheorem{lemma}[theorem]{Lemma}
\newtheorem{claim}[theorem]{Claim}
\newtheorem{corollary}[theorem]{Corollary}
\newtheorem{conjecture}[theorem]{Conjecture}
\newtheorem{observation}[theorem]{Observation}
\title{Bounded diameter monochromatic component covers}
\author{Alexey Pokrovskiy\thanks{	Department of Mathematics, 
		University College London, 
		Gower Street, London WC1E~6BT, UK. 
		Email: \texttt{dralexeypokrovskiy@gmail.com}}}
\begin{document}
\maketitle
\begin{abstract}
Ryser conjectured that every $r$-edge-coloured complete graph can be covered by $r-1$ monochromatic components. Motivated by a question of Austin in analysis, Mili\'cevi\'c  predicted something stronger --- that every $r$-edge-coloured complete graph can be covered by $r-1$ monochromatic components \emph{of bounded diameter}. Here we show that the two conjectures are equivalent. As immediate corollaries we obtain new results about Mili\'cevi\'c's Conjecture, most notably that it is true for $r=5$. We also obtain several new cases of a generalization of Mili\'cevi\'c's Conjecture to non-complete graphs due to DeBiasio-Kamel-McCourt-Sheats.

\end{abstract}
\section{Introduction}
This paper is about a conjecture commonly attributed to Ryser (it is unclear if Ryser actually made this conjecture~\cite{best2018did}. To our knowledge it first appeared in the thesis of Henderson \cite{hend}).  Recall that for a graph $G$, the independence number of $G$ (denoted by $\alpha(G)$) denotes the size of a maximum set of vertices containing no edges.
\begin{conjecture}[Ryser]\label{Conjecture_Ryser}
Every $r$-edge-coloured graph can be covered by $(r-1)\alpha(G)$ monochromatic components.
\end{conjecture}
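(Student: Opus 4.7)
The statement is Ryser's conjecture in its full generality, a problem open for several decades and currently verified only in the cases $r \le 5$ (from K\"onig's theorem for $r=2$, Aharoni's theorem for $r=3$, and subsequent work for $r=4,5$, all in the complete-graph case $\alpha(G)=1$). An honest plan must therefore be one for incremental progress rather than a full solution.

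The approach I would take is the standard dualization. Via an incidence correspondence, covering the vertex set of $G$ by monochromatic components is equivalent to bounding $\tau(H)$ in an $r$-partite $r$-uniform hypergraph $H$ built from the colour classes and the maximal independent sets of $G$, the target inequality being $\tau(H) \le (r-1)\nu(H)$. First I would reduce to the complete case $\alpha(G)=1$, since in all prior work the general case follows from the complete case through this hypergraph reformulation. Second, I would try to iterate the topological-combinatorial machinery of Aharoni--Berger--Ziv, which resolves $r=3$ via a nerve-theorem-based Hall-type statement; extending this uniformly to give $\tau(H) \le (r-1)\nu(H)$ for all $r$ would require a colourful Helly-type statement robust enough to be applied $r-1$ times without loss.

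The main obstacle is precisely where the literature has stalled: the topological invariants used (connectivity of appropriate nerves) degrade as $r$ grows and become too weak to preserve the exact $(r-1)$ coefficient, and naive induction on colours loses an additive constant at each step. A plausible intermediate target, aligned with the paper's theme, is to invoke the equivalence between Ryser's conjecture and its bounded-diameter strengthening proved here and then exploit short-diameter structure, via BFS-layer expansion or stability arguments, either to obtain an approximate version with $(1+o(1))(r-1)$ trees, or to settle the conjecture in structurally restricted classes such as quasi-random graphs, graphs of bounded independence number with additional regularity, or colourings in which one colour has bounded maximum degree.
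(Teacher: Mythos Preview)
The statement is labelled \emph{Conjecture} in the paper and is not proved there; the paper's contribution is Theorem~\ref{Theorem_main} (equivalence with the bounded-diameter version), not a resolution of Ryser's Conjecture. You correctly recognise that the full conjecture is open, so in that sense there is nothing to compare against.

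That said, several points in your summary and plan are inaccurate. First, the known cases: for $r=2$ (K\"onig) and $r=3$ (Aharoni) the conjecture is proved for \emph{all} $\alpha(G)$, not only $\alpha(G)=1$; only the $r=4,5$ results are restricted to complete graphs. Second, there is no known reduction from general $\alpha(G)$ to the complete case; your claim that ``the general case follows from the complete case through this hypergraph reformulation'' is wrong. The dualization (Proposition~\ref{Proposition_HypergraphEquivalence} here) translates between $r$-coloured graphs and $r$-partite $r$-uniform hypergraphs, but it does not collapse the matching number to $1$. Third, the hypergraph $\mathcal H(G)$ in that dualization has as vertices the monochromatic \emph{components} of $G$ and as edges the \emph{vertices} of $G$; maximal independent sets of $G$ play no role in its construction.

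Finally, your last paragraph proposes to ``invoke the equivalence\dots\ and then exploit short-diameter structure'' to make progress on Ryser. This is circular: the paper's main theorem says precisely that the bounded-diameter strengthening is equivalent to Ryser's Conjecture for each fixed $(r,\alpha)$, so the short-diameter version gives no additional leverage toward proving Ryser itself.
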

This conjecture is much more commonly phrased in terms vertex covers of hypergraphs. It is equivalent to the statement that ``every $r$-uniform, $r$-partite hypergraph $\mathcal H$ has  $\tau(\mathcal H)\leq (r-1)\nu(\mathcal H)$''. See Section~\ref{Section_Hypergraph_Equivalence} for an explanation of this equivalence.  
 In this paper we will talk about the ``coloured-graph'' version of Ryser's Conjecture because our aim will be to find a generalization of the conjecture in that setting.

Results about Ryser's Conjecture are limited. For $r=2$ it is equivalent to the famous K\"{o}nig-Hall Theorem. For $r=3$ it was proved by Aharoni using a very surprising proof relying on methods from topology \cite{ahro}. For $r\geq 4$ it is open in general. However, in the case $\alpha(G)=1$ (i.e. when $G$ is a complete graph), the conjecture has been proved for $r\leq 5$ (see \cite{duchet1979representations, gyarfas1977partition, tuz_un, tuz2}).  

The goal of this paper is to study a strengthening of Ryser's Conjecture posed by Mili\'cevi\'c. He conjectured that in the complete graph case ($\alpha(G)=1$), the diameters of the components in Conjecture~\ref{Conjecture_Ryser} can be bounded by some constant depending on $r$.
\begin{conjecture}[Mili\'cevi\'c, \cite{milicevic2019covering}]\label{Conjecture_Milicevic}
For all $r$ there  a constant $D(r)$ such that every $r$-edge-coloured complete graph can be covered by $r-1$ monochromatic components of diameter $\leq D(r)$.
\end{conjecture}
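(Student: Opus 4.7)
The plan is to attempt the conjecture by induction on $r$. The base case $r=2$ is classical: every $2$-edge-coloured complete graph contains a spanning monochromatic double star of diameter at most $3$. Indeed, pick any vertex $v$ and let $R, B$ be its red and blue neighbourhoods. Either some $u\in B$ has a red neighbour $w\in R\cup\{v\}$, in which case the red double star centred at $v$ and $u$ spans the whole graph, or else every edge between $R$ and $B$ is blue, giving a spanning blue double star. So $D(2)\leq 3$ suffices.

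For the inductive step, fix an $r$-edge-colouring of $K_n$. The strategy is to peel off one colour class so that the remaining problem is essentially $(r-1)$-coloured, and one of the $r-1$ trees has already been accounted for. By pigeonhole there is a vertex $v$ and a colour (say $r$) with $|N_r(v)|\geq (n-1)/r$. Set $A=N_r(v)\cup\{v\}$; the colour-$r$ star at $v$ covers $A$ with diameter $2$. It remains to cover $V\setminus A$ by $r-2$ monochromatic bounded-diameter trees drawn from the other $r-1$ colours. Applying the inductive hypothesis to $V\setminus A$ yields $r-2$ such trees, but they may miss vertices where colour $r$ dominates locally. One therefore needs an absorbing step extending each inductive tree by a constant-diameter boundary to swallow any missed vertex. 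A plausible tool is that in any $r$-coloured complete graph, every sufficiently large subset contains a monochromatic double star on a constant fraction of its vertices, which should allow vertex-by-vertex absorption while keeping diameters bounded by a function of $r$ only.

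The main obstacle is that this inductive scheme is essentially where the complete-graph case of Ryser's conjecture (Conjecture~\ref{Conjecture_Ryser}) has been stuck for $r\geq 6$, and imposing bounded diameter can only make the problem harder: any pigeonhole/Ramsey argument that delivers $r-1$ trees has to be sharpened to control the geometry of those trees as well. Realistically, the induction above will go through only for values of $r$ (up to $5$) where Ryser's conjecture in the complete-graph case is already established, and the most one could hope to prove unconditionally is an \emph{equivalence} with Ryser's conjecture in the complete-graph setting — which is indeed what the paper announces. A fully unconditional proof of Milićević's conjecture for all $r$ would entail resolving Ryser's conjecture in the complete-graph case and would almost certainly require a new structural ingredient, such as a diameter-controlled extension of Aharoni's topological method.
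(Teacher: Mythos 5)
You are attempting to prove a statement that the paper itself does not prove: Conjecture~\ref{Conjecture_Milicevic} is open for $r\geq 6$, and the paper's contribution is to show it is \emph{equivalent} to the complete-graph case of Ryser's Conjecture (Theorem~\ref{Theorem_main}), which yields the new case $r=5$ via Tuza's theorem. Your base case is fine ($D(2)\leq 3$ by the double-star argument), and your closing self-assessment is accurate, but the inductive step as written has a concrete gap that is fatal even before diameters enter the picture. After you delete $A=N_r(v)\cup\{v\}$, the graph on $V\setminus A$ is still $r$-coloured: nothing prevents colour $r$ from appearing, and indeed from being essential, among the remaining vertices. So the inductive hypothesis for $r-1$ colours simply does not apply to $V\setminus A$, and the ``absorbing step'' you invoke to fix this is not an auxiliary technicality --- it is the entire content of the problem. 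This peeling scheme is precisely the naive induction that fails for Ryser's Conjecture in the complete-graph case (even without any diameter constraint), which is why that conjecture is open for $r\geq 6$; no amount of pigeonhole or monochromatic-double-star extraction inside $V\setminus A$ is known to close it. A correct write-up cannot present this as a proof of the conjecture; at best it proves $D(2)\leq 3$.

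For contrast, the paper's route is entirely different and deliberately conditional. It passes to the dual $r$-partite hypergraph setting (Section~\ref{Section_Hypergraph_Equivalence}), formalizes ``case-analytic proofs'' of Ryser's Conjecture as Ryser-stable sequences (Lemma~\ref{Lemma_RyserStabilityEquivalence}, via the Sunflower Lemma), and then introduces $(a,b)$-duals of hypergraphs inside families of metrics. The technical core (Lemma~\ref{Lemma_Stability_Transferrence}) shows that each step of such a case analysis can be executed while controlling distances, so any proof of Ryser's Conjecture for given $(r,\alpha)$ automatically upgrades to a bounded-diameter cover. If you want to salvage your approach, the realistic target is not an induction on $r$ but exactly this kind of transference statement: assume Ryser's Conjecture for the given $r$ and show that the trees it produces can be taken to have bounded diameter.
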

We remark that in this conjecture  and its relatives, one can replace ``monochromatic components'' by ``monochromatic trees'' without changing the nature of the conjecture (using the fact that any connected graph of diameter $\le d$ contains a spanning tree of diameter $\le 2d$).  For $r=2$, it is well known that for every graph $G$, either $G$ or the complement of $G$ is connected of diameter $\leq 3$ i.e. $D(2)\leq 3.$
Mili\'cevi\'c proved this conjecture for $r\leq 4$ by showing that $D(3)\leq 8, D(4)\leq 80$ \cite{milicevic2015commuting, milicevic2019covering}. 
DeBiasio-Kamel-McCourt-Sheats improved the bounds, by showing that $D(3)\leq 4, D(4)\leq 6$.

The motivation for Mili\'cevi\'c's Conjecture comes from analysis. 
For a function $f$ on a metric space $(d,X)$ and $\lambda\in (0,1)$, we say that $f$ is a $\lambda$-contraction if $d(f(x), f(y))\leq \lambda d(x,y)$ for all $x,y \in X$. 
Austin posed the following problem.
\begin{conjecture}[Austin, \cite{austin2005contractive}]
For all $\lambda\in (0,1)$, suppose that $\{f_1, \dots, f_r\}$ is a  family of $\lambda$-contractions on a complete metric space which pairwise commute with each other. Then  $\{f_1, \dots, f_r\}$ 
have a common fixed point.
\end{conjecture}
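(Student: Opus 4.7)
The plan is to apply the Banach fixed point theorem to each contraction individually and then use commutativity to force the resulting fixed points to coincide.

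Concretely, because $X$ is complete and each $f_i$ is a strict contraction (i.e.\ $\lambda\in (0,1)$), Banach's theorem supplies, for every $i$, a unique $p_i \in X$ with $f_i(p_i) = p_i$. The key step is then to show that $p_1=p_2=\cdots=p_r$. Fix any indices $i,j$. By commutativity,
\[
f_j\bigl(f_i(p_j)\bigr) \;=\; f_i\bigl(f_j(p_j)\bigr) \;=\; f_i(p_j),
\]
so $f_i(p_j)$ is a fixed point of $f_j$. The uniqueness clause of Banach's theorem then forces $f_i(p_j) = p_j$, i.e.\ $p_j$ is fixed by $f_i$. A second application of uniqueness, this time to $f_i$, yields $p_j = p_i$. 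Since $i,j$ were arbitrary, the points $p_1,\dots,p_r$ all agree, and their common value is the desired common fixed point.

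There is no real obstacle to this argument: once one recalls the uniqueness clause in Banach's theorem, the proof is only a few lines, with no combinatorial content of any kind. This strongly suggests that the depth the paper is targeting does not live in the bare statement of Austin's conjecture but in a quantitative refinement of it --- for instance, uniform control (independent of the space $X$ and of the initial point) over the distance that an orbit must travel before reaching the common fixed point, measured in terms of how far each individual $f_i$ moves points. Such a quantitative strengthening is exactly the kind of ``bounded-size'' phenomenon one would expect to translate, via Mili\'cevi\'c's reduction, into the bounded-diameter requirement of Conjecture~\ref{Conjecture_Milicevic}, and hence into the Ryser-type covering problems that the rest of the paper addresses.
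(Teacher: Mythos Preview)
This statement is labelled a \emph{conjecture} in the paper and is not proved there; the paper quotes it only as motivation, recording that Austin handled $r=2$ and that Mili\'cevi\'c's reduction yields the cases $r\le 4$ for sufficiently small $\lambda$. There is thus no proof in the paper to compare your attempt against.

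Your argument is nonetheless correct for the statement exactly as written. Each $f_i$ is a strict contraction on a complete space, so Banach's theorem supplies a unique fixed point $p_i$; commutativity gives $f_j(f_i(p_j))=f_i(f_j(p_j))=f_i(p_j)$, so $f_i(p_j)$ is fixed by $f_j$, whence $f_i(p_j)=p_j$ by uniqueness for $f_j$, whence $p_j=p_i$ by uniqueness for $f_i$. Nothing is missing.

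The upshot is that the conjecture, in the form quoted here, is a two-line exercise --- which is plainly at odds with the surrounding discussion (Austin needing a proof for $r=2$; the question being treated as open beyond $r=5$). Your closing paragraph correctly diagnoses the tension: the paper has evidently abbreviated or mis-stated Austin's actual problem. The genuine question, as it appears in Mili\'cevi\'c's work, carries quantitative content --- uniform control over how far orbits must travel, or a setting in which individual Banach fixed points are not available a priori --- and it is \emph{that} content which connects to bounded-diameter monochromatic covers. The bare qualitative statement you were handed does not carry it, and your short proof exposes this.
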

Austin proved this conjecture for $r=2$ \cite{austin2005contractive}.
Mili\'cevi\'c's proved the conjecture for $r=3$ and for sufficiently small $\lambda$. A key ingredient in this proof was the $r=3$ case of Conjecture~\ref{Conjecture_Milicevic}, which motivated Mili\'cevi\'c to make the general conjecture.

Ryser's Conjecture is about non-complete graphs, so one may naturally wonder whether Conjecture~\ref{Conjecture_Milicevic} should extend to non-complete graphs too. Such an extension was conjectured by DeBiasio, Kamel, McCourt, Sheats.
\begin{conjecture}[DeBiasio-Kamel-McCourt-Sheats, \cite{debiasio2020generalizations}]\label{Conjecture_Non_complete}
For all $\alpha \geq 1$, there exists $d = d(\alpha)$ such that for all $r \geq 2$, if $G$ is a graph with $\alpha(G) = \alpha$, then in every $r$-coloring of $G$, there exists $(r - 1)\alpha$ monochromatic components of diameter $\leq d$ covering $G$. 
\end{conjecture}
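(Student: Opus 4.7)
The plan is to reduce Conjecture~\ref{Conjecture_Non_complete} to Ryser's Conjecture, by establishing the following equivalence: \emph{for each fixed $\alpha$, the bounded-diameter statement of Conjecture~\ref{Conjecture_Non_complete} holds if and only if Ryser's Conjecture (Conjecture~\ref{Conjecture_Ryser}) holds for graphs $G$ with $\alpha(G)=\alpha$}. One direction is immediate: a cover by monochromatic components of diameter $\leq d$ is in particular a cover by monochromatic trees. The converse is the core implication, to be proved in analogy with the paper's equivalence between Ryser's and Mili\'cevi\'c's Conjectures for complete graphs.

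For the non-trivial direction, I would argue by contradiction. Suppose for some $\alpha$ the bounded-diameter statement fails: then for every $n$ there is an $r_n$-edge-colouring of a graph $G_n$ with $\alpha(G_n)=\alpha$ that cannot be covered by $(r_n-1)\alpha$ monochromatic components of diameter $\leq n$. Step one is to reduce to a bounded number of colours $r_n\leq R(\alpha)$: any colour class whose components are already of small diameter contributes ``for free'' to the desired cover, so such classes may be merged or discarded, and a Ramsey-type argument exploiting the hypothesis $\alpha(G)=\alpha$ should bound the number of ``essential'' colours by a function of $\alpha$ alone. Step two is a compactness argument: having fixed the number of colours, extract a limit of the $G_n$ (for instance via an ultraproduct, a Borel graph, or a carefully chosen finitary blow-up) whose colouring provably admits no cover by $(r-1)\alpha$ monochromatic trees, contradicting Ryser's Conjecture. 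With this equivalence in hand, Conjecture~\ref{Conjecture_Non_complete} would follow in every case where Ryser's Conjecture is known: when $r\leq 3$ (by K\"onig--Hall and Aharoni, for any $\alpha$), and when $\alpha=1$ and $r\leq 5$, yielding several cases not previously available.

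The chief obstacle is the colour-reduction step together with the requirement that the final $d$ depends only on $\alpha$ and not on $r$. Many colours could each cover only a small portion of $G$, so to bound the number of colours one must simultaneously merge classes and control the resulting diameter blow-up. Moreover, even after the number of colours is bounded, producing a diameter bound independent of $r$ requires the merging to be quantitatively uniform in a way that the complete-graph proof (which is allowed to depend on $r$) does not supply. Making this quantitative, so that a single $d(\alpha)$ works for all $r$ simultaneously, is the crux of the argument and the point at which a direct adaptation of the complete-graph equivalence is likely to fail, suggesting that genuinely new structural input is needed beyond the arguments developed for $\alpha=1$.
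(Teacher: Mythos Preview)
The statement you are addressing is a \emph{conjecture}; the paper does not prove it. What the paper actually establishes is Theorem~\ref{Theorem_main}: for each \emph{fixed pair} $(r,\alpha)$, Ryser's Conjecture is equivalent to a cover by $(r-1)\alpha$ monochromatic trees of diameter at most $9^{r^{2^{(r+\alpha)^{4r}}}}$. Crucially this bound depends on $r$ as well as on $\alpha$, so even granting Ryser's Conjecture in full, the paper does not deduce Conjecture~\ref{Conjecture_Non_complete} as stated, since the latter demands a single $d(\alpha)$ valid simultaneously for all $r$. Your plan aims at exactly this stronger, $r$-independent bound, which is genuinely beyond what the paper achieves (and is flagged as open in the concluding remarks).

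The gap is precisely where you locate it, but the mechanisms you sketch do not close it. The claim that ``any colour class whose components are already of small diameter contributes for free'' is false as stated: the budget is $(r-1)\alpha$ \emph{components}, not colours, so a colour class of small-diameter components may comprise arbitrarily many pieces and contribute nothing for free. More seriously, no Ramsey-type argument bounds the number of essential colours by a function of $\alpha$ alone: already for $\alpha=1$ a complete graph can carry an $r$-colouring for any $r$ in which every colour class is a connected spanning subgraph of large diameter (e.g.\ affine-plane colourings), so nothing forces the number of colours down. Merging colours lowers $r$ and hence \emph{tightens} the target $(r-1)\alpha$, so merging is not harmless either. Finally, your compactness step would at best produce a limit object in which the bounded-diameter cover fails, not a finite graph violating Ryser's Conjecture itself; extracting a genuine Ryser counterexample from a sequence of bounded-diameter failures is exactly the implication you are trying to prove. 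You correctly diagnose that new structural input is required; the proposal does not supply it, and the paper makes no claim to.
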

This conjecture is known  for $(\alpha, r)=(1,2), (1,3), (1,4)$ (where it is just Conjecture~\ref{Conjecture_Milicevic}), $(\alpha, r)=(2, 2)$, where it was proved by DeBiasio, Kamel, McCourt, Sheats~\cite{debiasio2020generalizations}, and $r=2$, $\alpha\in \mathbb{N}$ where it was proved by DeBiasio, Girão,  Haxell,  Stein~\cite{debiasio2025bounded}. Additionally there has been work~\cite{english2021low, gyarfas2025bounded, gyarfas20252} on figuring out how small the diameter $d$ can be for various $r$ and $\alpha$.

The goal of this paper is to show that for every fixed $\alpha, r$ Ryser's Conjecture is equivalent to Conjecture~\ref{Conjecture_Non_complete}.
\begin{theorem}\label{Theorem_main}
For fixed $r, \alpha$ the following are equivalent.
\begin{enumerate}[(i)]
\item Ryser's Conjecture holds for all $r$-edge-coloured graphs with $\alpha(G)=\alpha$.
\item Every $r$-edge-coloured graph with $\alpha (G)=\alpha$ can be covered by $(r-1)\alpha(G)$ monochromatic components of diameter $\leq  9^{r^{2^{(r+\alpha)^{4r}}}}$.
\end{enumerate}
\end{theorem}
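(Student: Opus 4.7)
The direction (ii)$\Rightarrow$(i) is immediate, since a bounded-diameter monochromatic tree is in particular a monochromatic tree; so a bounded-diameter cover certifies Ryser's bound. I therefore focus on (i)$\Rightarrow$(ii).

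My plan is to begin with the Ryser cover that (i) guarantees, and iteratively rearrange it into a bounded-diameter cover. Concretely, apply (i) to obtain $T_1,\dots,T_N$ covering $G$ with $N=(r-1)\alpha$, where $T_i$ has colour $c_i$. If every $T_i$ already has diameter at most our target $D$, we are done. Otherwise pick an offending $T_i$ of diameter $>D$, fix a diametrical path, and consider the BFS layering $L_0,L_1,\dots,L_D$ of $T_i$ rooted at one endpoint. Any colour-$c_i$ edge of $G$ between $L_j$ and $L_k$ with $|j-k|\geq 2$ either is absent in $G$ or, if present, lies outside $T_i$; the hypothesis $\alpha(G)=\alpha$ bounds how many such edges can be absent across fixed layer indices, so many of them must be present with some colour.

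Now apply Ramsey/pigeonhole over the remaining $r-1$ colours and over the $D$ layer indices: for $D$ sufficiently large one extracts indices $j_1<\dots<j_t$ with $t$ as large as needed, and a single colour $c'\neq c_i$ carrying a structured ``cross-layer'' object (e.g.\ a consistent matching or a star-like pattern between layers $L_{j_a}$ and $L_{j_b}$). This structured object can be converted into a bounded-diameter monochromatic subgraph $S$ of colour $c'$ covering a useful portion of $V(T_i)$. I then remove $V(S)$ and recurse on $G\setminus V(S)$ with its induced $r$-colouring (still of independence number $\leq \alpha$), splicing $S$ back into the resulting cover. The recursion depth is bounded by a function of $r$ and $\alpha$ alone, since each step strictly decreases a well-chosen progress measure (essentially the number of ``long'' trees in the current cover, ordered lexicographically against the length of the longest), and each step amplifies the diameter by a controlled factor, accumulating into the four-level tower $9^{r^{2^{(r+\alpha)^{4r}}}}$.

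The main obstacle is the book-keeping that links the extracted bounded-diameter piece $S$ to the residual Ryser cover, so that the pieces reassemble into a cover of $G$ using \emph{exactly} $(r-1)\alpha$ trees rather than more. In particular, the Ramsey threshold $t$ must be chosen large enough that the extracted structure is stable under the inductive step (so the next recursive cover's diameters can be absorbed), and the potential function must be set so that the recursion terminates in a number of steps depending only on $r+\alpha$. I expect the innermost exponent $(r+\alpha)^{4r}$ to come from a Ramsey/hypergraph extraction, the $2^{(\cdot)}$ layer from the number of colour-subset configurations iterated over, the $r^{(\cdot)}$ from compounding across the $r$ colours, and the outer $9^{(\cdot)}$ from the base-case constants (mirroring the $D(3)\leq 4$, $D(4)\leq 6$ known bounds).
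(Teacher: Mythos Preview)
Your outline has a genuine gap at its core. After you extract a bounded-diameter monochromatic piece $S$ and delete $V(S)$, applying (i) to $G\setminus V(S)$ yields $(r-1)\alpha$ monochromatic trees; adjoining $S$ gives $(r-1)\alpha+1$, not $(r-1)\alpha$. You flag this as ``the main obstacle'' but provide no mechanism to absorb the extra tree, and none is apparent: nothing ties $S$ to any tree in the \emph{new} Ryser cover of the smaller graph. The termination argument is equally missing: the Ryser cover of $G\setminus V(S)$ is produced afresh by (i) and bears no relation to the cover of $G$, so ``the number of long trees'' can jump back up to $(r-1)\alpha$ at every step; your proposed progress measure is not monotone across recursive calls on different graphs, and nothing prevents the recursion depth from depending on $|V(G)|$. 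The Ramsey step is also only gestured at: a long colour-$c_i$ path gives many BFS layers, but you have not specified what coloured object you extract between them, nor why it covers a ``useful portion'' of $V(T_i)$ rather than a sparse subset.

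For comparison, the paper does not try to shorten an existing Ryser cover at all. It first shows (via the Sunflower Lemma) that (i) is equivalent to the existence of a finite \emph{Ryser-stable sequence} $\mathcal H_1,\dots,\mathcal H_\ell$ of $r$-partite hypergraphs with $\ell\le 2^{(r+\alpha)^{2r}}$, ending in a single edge, where each $\mathcal H_i$ carries a size-$(r-1)\alpha$ cover $C_i$ such that every superhypergraph is either covered by $C_i$ or contains some earlier $\mathcal H_j$ or the matching $\mathcal M_{r,\alpha+1}$. It then introduces $(a,b)$-duals, a metric refinement of the hypergraph/coloured-graph duality, and proves a transference lemma: if the family of colour metrics on $V(G)$ contains an $(m,km)$-dual of $\mathcal H_i$, then either $(r-1)\alpha$ balls of radius $km$ cover $V(G)$, or the metrics contain an $(m',k^{1/4r}m')$-dual of some earlier $\mathcal H_j$. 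Iterating down the sequence with $k_i=9^{(4r)^{i+1}}$ gives the tower bound. Crucially, no covers from different stages are ever combined: one locates the minimal $i$ for which a suitable dual of $\mathcal H_i$ exists and uses the balls coming from $C_i$ alone, so the count $(r-1)\alpha$ is automatic.
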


The question of whether such a reduction might be possible was actually raised in~\cite{debiasio2025bounded}.  
This theorem largely reduces Conjectures~\ref{Conjecture_Milicevic} and~\ref{Conjecture_Non_complete}  to the much better-studied conjecture of Ryser. As a consequence, all known results about Ryser's Conjecture carry over to the conjectures of Mili\'cevi\'c and DeBiasio-Kamel-McCourt-Sheats.
\begin{corollary}\label{Corollary1}
Mili\'cevi\'c's Conjecture holds for $r=5$.
\end{corollary}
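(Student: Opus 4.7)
The corollary should follow almost immediately from Theorem~\ref{Theorem_main} by feeding in a known case of Ryser's Conjecture, so the ``plan'' is mostly a matter of identifying the right specialisation rather than proving anything new. The plan is to instantiate Theorem~\ref{Theorem_main} at $r=5$ and $\alpha = 1$, and then cite the classical $r=5$ case of Ryser's Conjecture for complete graphs.

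More precisely, first I would observe that Mili\'cevi\'c's Conjecture for $r=5$ is exactly statement~(ii) of Theorem~\ref{Theorem_main} for the parameters $r=5,\alpha=1$: both assert that every $5$-edge-coloured complete graph can be covered by $(r-1)\alpha = 4$ monochromatic trees of bounded diameter. By Theorem~\ref{Theorem_main} this is equivalent to statement~(i) of the theorem with the same parameters, namely Ryser's Conjecture for $5$-edge-coloured graphs $G$ with $\alpha(G)=1$, i.e.\ for $5$-edge-coloured complete graphs. Next I would invoke the result of Tuza~\cite{tuz_un, tuz2} (building on \cite{duchet1979representations, gyarfas1977partition}), noted in the introduction, which establishes precisely this case of Ryser's Conjecture: every $5$-edge-coloured complete graph can be covered by $4$ monochromatic trees. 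Plugging this into (i)$\Rightarrow$(ii) of Theorem~\ref{Theorem_main} yields Mili\'cevi\'c's Conjecture for $r=5$, with an explicit (astronomically large) diameter bound of $9^{5^{2^{6^{20}}}}$ coming directly from the bound in (ii).

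There is essentially no obstacle here: the corollary is a one-line application of the main theorem. The only thing to be careful about is matching the parameters correctly ($\alpha=1$ gives complete graphs, $(r-1)\alpha = 4$ covering trees), and to note that the bound on $D(5)$ one obtains is not meaningful numerically but qualitatively establishes the existence of a constant $D(5)$, which is precisely what Mili\'cevi\'c's Conjecture asks for. All of the genuine mathematical content is hidden in Theorem~\ref{Theorem_main}; the corollary merely records that once that equivalence is available, every known case of Ryser's Conjecture immediately yields a new case of Mili\'cevi\'c's Conjecture, and $r=5$ is the smallest genuinely new case (since $r\le 4$ was already handled by Mili\'cevi\'c and by DeBiasio-Kamel-McCourt-Sheats).
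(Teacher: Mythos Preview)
Your proposal is correct and matches the paper's own argument exactly: the paper states that the corollary ``follows from Theorem~\ref{Theorem_main} combined with Tuza's proof of Ryser's Conjecture for $r=5$, $G=K_n$'', which is precisely the specialisation to $r=5$, $\alpha=1$ plus the citation of Tuza that you describe.
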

This follows from Theorem~\ref{Theorem_main} combined with Tuza's proof of Ryser's Conjecture for $r=5$, $G=K_n$.

By combining Theorem~\ref{Theorem_main} with Aharoni's topological proof of Ryser's Conjecture for $r=3$, we   obtain Conjecture~\ref{Conjecture_Non_complete} for $\alpha =3$:
\begin{corollary}\label{Corollary3}
Every $3$-edge-coloured graph $G$ can be covered by  $2\alpha(G)$ monochromatic components of diameter $\leq  9^{3^{2^{\alpha(G)^{12}}}}$.
\end{corollary}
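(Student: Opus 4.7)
The plan is straightforward: Corollary~\ref{Corollary3} should follow by combining Theorem~\ref{Theorem_main} with Aharoni's topological proof~\cite{ahro} of Ryser's Conjecture for $r=3$, with essentially no new work.

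First I would recall the statement of Aharoni's theorem: every $3$-edge-coloured graph $G$ can be covered by $2\alpha(G)$ monochromatic trees. A key point is that Aharoni's result applies to \emph{arbitrary} graphs $G$ (this is the content of Ryser's Conjecture in the full generality stated in Conjecture~\ref{Conjecture_Ryser}, and it is equivalent to the $3$-partite hypergraph statement proved by Aharoni, via the equivalence mentioned in Section~\ref{Section_Hypergraph_Equivalence}), not just to complete graphs. In the language of Theorem~\ref{Theorem_main}, this means that condition (i) is known to hold for $r=3$ and every value of $\alpha\geq 1$.

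Next I would invoke Theorem~\ref{Theorem_main} with $r=3$ and $\alpha=\alpha(G)$. Since (i) holds by Aharoni, the equivalence of (i) and (ii) yields a cover of $G$ by $2\alpha(G)$ monochromatic trees of diameter at most $9^{3^{2^{(3+\alpha(G))^{4\cdot 3}}}}=9^{3^{2^{(3+\alpha(G))^{12}}}}$. The slightly simpler tower $9^{3^{2^{\alpha(G)^{12}}}}$ displayed in the corollary is obtained after absorbing the inner $3$ into the constants (or by noting that for $\alpha(G)\geq 2$ the two bounds differ only by a bounded multiplicative factor inside the top tier of the tower, which can be compensated by replacing the base $9$ appropriately; for $\alpha(G)=1$ the weaker bound of Mili\'cevi\'c's Conjecture for $r=3$ already suffices).

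There is no obstacle of substance: all the real difficulty has been packaged into Theorem~\ref{Theorem_main}, which performs the upgrade from an unbounded-diameter tree cover to a bounded-diameter one, and into Aharoni's topological argument, which supplies the initial unbounded-diameter cover. The proof of the corollary is simply the composition of these two inputs, and the only care needed is to check that Aharoni's theorem is being used in its non-complete-graph form (otherwise one would only recover the special case $\alpha=1$, which follows from earlier work on Mili\'cevi\'c's Conjecture).
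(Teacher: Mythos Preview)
Your approach is exactly the paper's: invoke Aharoni's theorem to verify hypothesis (i) of Theorem~\ref{Theorem_main} for $r=3$ and every $\alpha$, then read off (ii). That is all the paper does as well (it gives no separate argument for the corollary).

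One correction, though: your attempt to reconcile the exponent $(3+\alpha(G))^{12}$ coming out of Theorem~\ref{Theorem_main} with the $\alpha(G)^{12}$ displayed in the corollary is not right. The two towers do \emph{not} differ by a bounded factor at the top tier: the innermost exponents satisfy $(3+\alpha)^{12}-\alpha^{12}\sim 36\alpha^{11}$, so the discrepancy propagates to a super-exponential gap between the two bounds, and no amount of adjusting the base $9$ will absorb it. In fact the bound printed in the corollary is strictly \emph{stronger} than what Theorem~\ref{Theorem_main} literally gives, so this appears to be a cosmetic slip in the paper rather than something you are expected to justify; the honest bound your argument produces is $9^{3^{2^{(3+\alpha(G))^{12}}}}$.
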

 
We remark that our proof methods naturally prove something a little stronger than Theorem~\ref{Theorem_main}. Instead of coloured graphs, our proof works with metric spaces and so our main technical result  (Lemma~\ref{Lemma_Stability_Transferrence}) is a statement about families of metrics defined on the same finite set.

\section{Proof ideas and examples}
In this section we illustrate the basic idea of the proof of this paper. We focus just on how our methods work in the (previously known) case when $r=3$, $\alpha=1$.

The method we use to obtain the reduction in Theorem~\ref{Theorem_main}  can be summarized as ``we show that if Ryser's Conjecture is true, then a certain case-analytic proof approach to it can always be executed whilst controlling the diameters of the monochromatic components''. The fact that for every fixed $r, \alpha$, Ryser's Conjecture can be proved or disproved using a finite case analysis is a known (folklore) result. 
Below is an example of what we mean by a ``case analytic'' proof of the conjecture for $r=3$.

\begin{figure}[h!]
 \centering
    \includegraphics[width=0.9\textwidth]{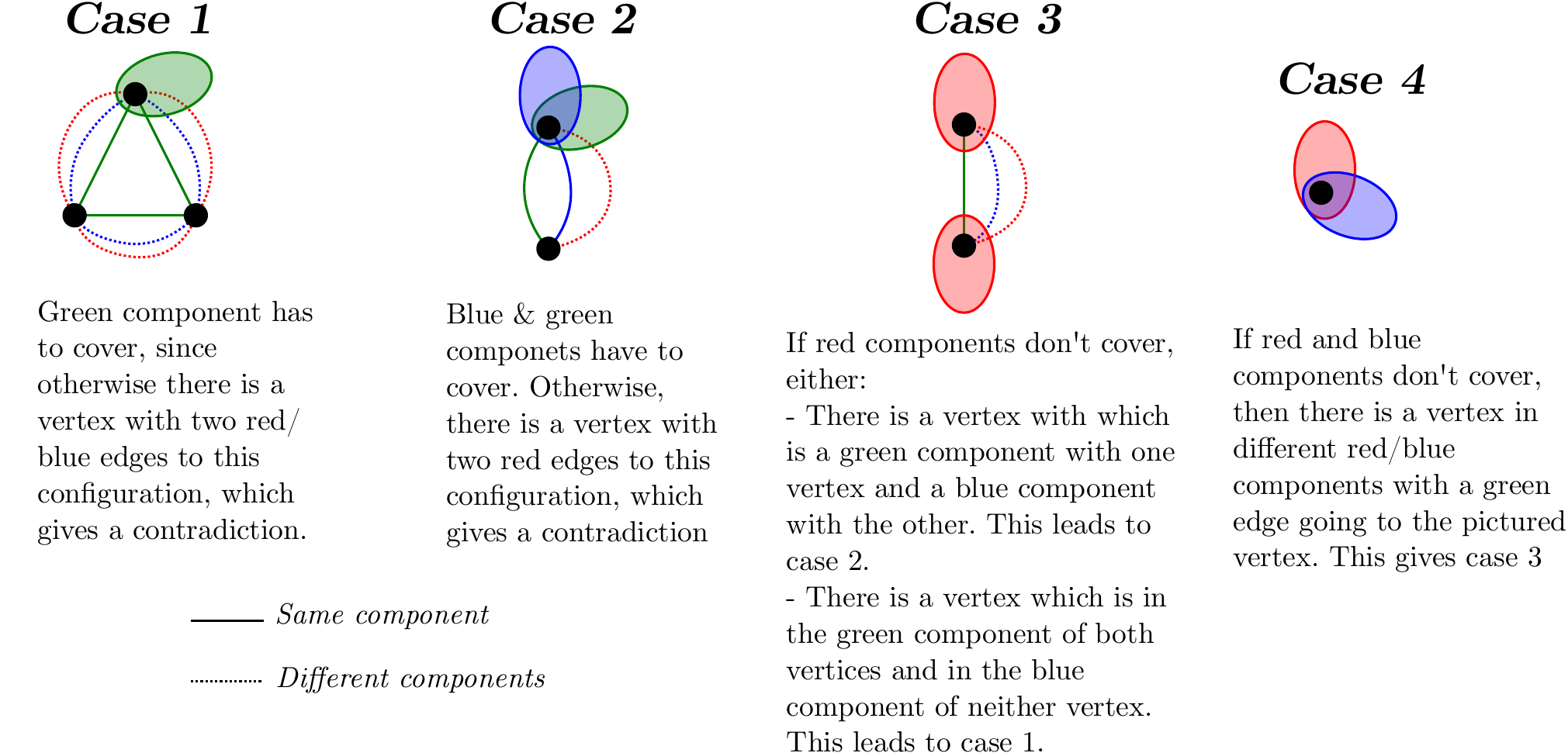}
  \caption{\footnotesize A proof of of Ryser's Conjecture for $r=3$.  Each case represents a configuration of monochromatic components that can occur in a $3$-coloured $K_n$. Each case has $\leq 2$ special monochromatic components which should cover the $3$-coloured $K_n$ (the shaded coloured ovals). }
\label{FigureColouredGraph}
\end{figure}

The above proof doesn't control the diameter of the monochromatic components it produces. However, by suitably modifying it, it is possible to obtain such a control. 

\begin{figure}[H]
 \centering
    \includegraphics[width=0.9\textwidth]{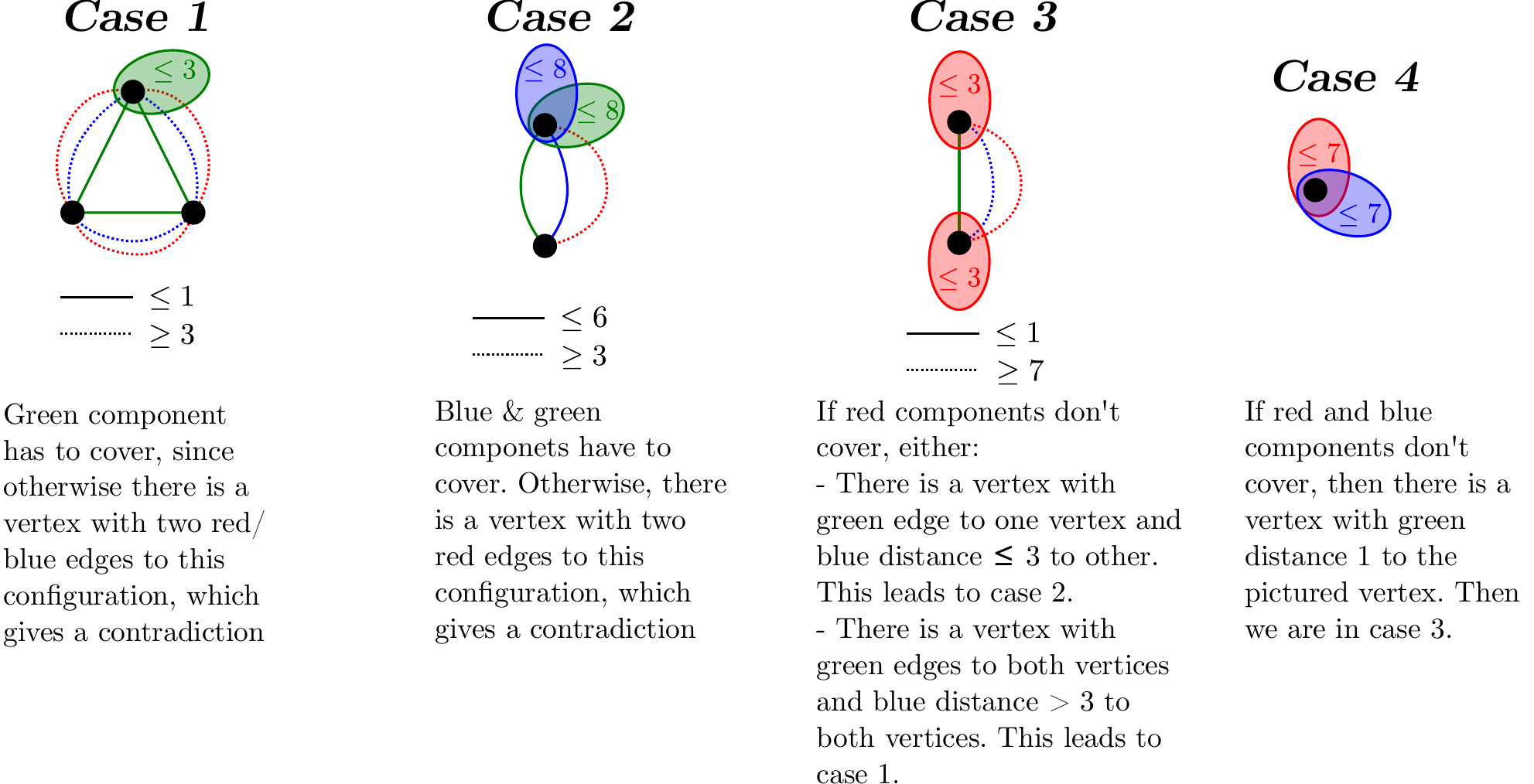}
  \caption{\footnotesize A proof of Conjecture~\ref{Conjecture_Milicevic} for $r=3$. Each case represents a configuration of vertices in a $3$-coloured $K_n$ with restrictions on the coloured distances between them. Each case has $\leq 2$ special monochromatic components of bounded diameter which should cover the $3$-coloured $K_n$ (the shaded coloured ovals).}
\label{FigureMetric}
\end{figure}

Notice that there is no difference between the pictures that are drawn --- the only differences between the proofs are in the arguments that go with the pictures. 
The basic idea of the proof of Theorem~\ref{Theorem_main} is formally show that (for fixed $\alpha, r$) a proof of Ryser's Conjecture can be modified in a way to control the diameter of the monochromatic components it produces.
There are a number of steps we go through to achieve this:
\begin{itemize}
\item In Section~\ref{Section_Hypergraph_Equivalence}, we recall that Ryser's Conjecture has an equivalent statement in terms of matchings and covers of $r$-partite hypergraphs. 
\item In Section~\ref{Section_ryser_Stability}, we recall the definition of a ``Ryser stable sequence''. This is a definition introduced in~\cite{abu2016matchings} which formalizes the idea of ``a proof of Ryser's Conjecture using a case analysis''. We also prove Lemma~\ref{Lemma_RyserStabilityEquivalence} which shows that a Ryser stable sequence exists $\iff$ Ryser's Conjecture is true for particular $\alpha,r$.
\item In Section~\ref{Section_metrics}, we introduce some notation about finite metric spaces.  
\item Section~\ref{Section_approximate_duals} is where the main proof of the paper takes place. First a key new definition of an ``$(a,b)$-dual'' of a hypergraph is introduced. Then, we prove the main technical lemma of the paper (Lemma~\ref{Lemma_Stability_Transferrence}). Then we use this to deduce two lemmas which show that if there exists a Ryser-stable sequence then one can control diameters of components in Ryser's conjecture. Theorem~\ref{Theorem_main} follows easily from these.  
\end{itemize}

\section{Coloured graphs and $r$-partite hypergraphs}\label{Section_Hypergraph_Equivalence}
There is a a duality between $r$-uniform, $r$-partite hypergraphs, and edge-colourings of graphs. This duality was first observed in Erd\H{o}s, Gy\'arf\'as, and Pyber's paper \cite{erdHos1991vertex}. The duality works as follows:

\textbf{Hypergraphs $\to$ coloured graphs:} For a $r$-partite, $r$-uniform hypergraph, define a  $r$-edge-coloured multigraph $G(\mathcal H)$ by letting $V(G(\mathcal H))=E(\mathcal H)$, with a colour $i$ edge between $e,f\in V(G(\mathcal H))$ whenever $e$ and $f$ intersect in partition $i$.

\textbf{Coloured graphs $\to$ hypergraphs:} For a $r$-edge-coloured multigraph $G$, define an $r$-partite hypergraph $\mathcal H(G)$ as follows. Let  $V(\mathcal H)$ be the set of monochromatic components of $G$, with partition $i$ of $\mathcal H(G)$ consisting of the colour $i$ components.
For each vertex $v\in V(G)$, we place an edge in $\mathcal H(G)$ whose vertex in part $i$ is the colour $i$ component containing $v$.
Under these transformations, various notions carry over as follows:
\begin{center}
\begin{tabular}{| l| l |}
 \hline
  Hypergraph world & Edge-coloured graph world \\
  \hline
  matching & independent set  \\
  edge & vertex  \\
  vertex & monochromatic component  \\
  partition & colour  \\
  sub-hypergraph & induced subgraph  \\
  vertex-cover & monochromatic component cover  \\
   \hline
\end{tabular}
\end{center} 
Using these we immediately obtain the equivalence between the two forms of Ryser's Conjecture:
Using this correspondence we immediately get the equivalence between the two forms of Ryser's Conjecture. 
\begin{proposition}\label{Proposition_HypergraphEquivalence}
For fixed $\nu, r\in \mathbb N$, the following are equivalent:
\begin{itemize}
\item Every $r$-uniform, $r$-partite hypergraph $\mathcal H$ with matching number $\nu$ has $\tau(\mathcal H)\leq (r-1)\nu$.
\item Every $r$-edge-coloured multigraph $G$ with independence number $\nu$ can be covered by $(r-1)\nu$ disjoint monochromatic components.
\end{itemize}
\end{proposition}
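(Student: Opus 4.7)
The plan is to verify that the two transformations $\mathcal H \mapsto G(\mathcal H)$ and $G \mapsto \mathcal H(G)$ translate the relevant parameters exactly as recorded in the table, and then read off each direction of the equivalence as a mechanical consequence. The two things to check are (a) independent sets in $G(\mathcal H)$ correspond to matchings in $\mathcal H$, and (b) vertex covers of $\mathcal H$ correspond to monochromatic tree covers of $G(\mathcal H)$.

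For (a), two edges $e,f$ of an $r$-partite $r$-uniform $\mathcal H$ are adjacent in $G(\mathcal H)$ iff they share a vertex in some partition, which (by $r$-partiteness and $r$-uniformity) holds iff they intersect in $\mathcal H$; so $\alpha(G(\mathcal H)) = \nu(\mathcal H)$. On the other side, a matching in $\mathcal H(G)$ is a set of vertices of $G$ lying in pairwise-distinct colour-$c$ components for every colour $c$, which in particular forces pairwise non-adjacency, yielding $\nu(\mathcal H(G)) \leq \alpha(G)$. For (b), since each edge of $\mathcal H$ has exactly one vertex in each partition, the edges containing a fixed vertex $v$ in partition $i$ form a colour-$i$ clique in $G(\mathcal H)$, and as $v$ ranges over partition $i$ these cliques are precisely the colour-$i$ components of $G(\mathcal H)$. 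Hence a vertex cover of $\mathcal H$ of size $k$ translates directly into $k$ monochromatic trees covering $V(G(\mathcal H))$ (take spanning trees of the corresponding cliques), and conversely any cover of $G(\mathcal H)$ by $k$ monochromatic trees produces a vertex cover of $\mathcal H$ of size $\leq k$, by selecting from each tree the vertex of $\mathcal H$ that labels its ambient monochromatic component.

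With the correspondences in place, both implications are immediate. For ``hypergraph $\Rightarrow$ graph'', I would take $G$ with $\alpha(G) = \nu$, form $\mathcal H(G)$, apply the hypergraph hypothesis (using $\nu(\mathcal H(G)) \leq \nu$) to get $\tau(\mathcal H(G)) \leq (r-1)\nu$, and transport the result back to a monochromatic tree cover of $G$ of the required size. For the converse, I would take $\mathcal H$ with $\nu(\mathcal H) = \nu$, form $G(\mathcal H)$, use $\alpha(G(\mathcal H)) = \nu$, apply the graph hypothesis, and transport the cover back. The only real point of care is the asymmetry $\nu(\mathcal H(G)) \leq \alpha(G)$ rather than equality: this is harmless, but if the hypergraph statement is read strictly for matching number equal to $\nu$, one can augment $\mathcal H(G)$ by $\nu - \nu(\mathcal H(G))$ pairwise-disjoint fresh edges on new vertices, whose covering cost is absorbed into the bound. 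The word ``disjoint'' in the coloured-graph formulation is then a routine matter of pruning overlapping covering components down to vertex-disjoint spanning subtrees, so the proof is essentially bookkeeping once the table is formalised.
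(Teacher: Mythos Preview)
Your proposal is correct and is precisely the argument the paper has in mind: the paper gives no proof beyond the sentence ``Using this correspondence we immediately get the equivalence between the two forms of Ryser's Conjecture'', and your write-up is the natural fleshing-out of that correspondence, including the care taken over $\nu(\mathcal H(G))\le\alpha(G)$ via the augmentation trick. The only loose end is the word ``disjoint'' in the statement: your pruning remark is not quite enough to guarantee vertex-disjoint \emph{connected} trees in general, but this word appears nowhere else in the paper's formulations of Ryser's Conjecture and is almost certainly a slip rather than a substantive requirement.
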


\section{Ryser stability}\label{Section_ryser_Stability}
Here we formalize what we mean by ``case analytic proof of Ryser's Conjecture''. The key notions of \emph{Ryser-stable hypergraphs} and of \emph{Ryser-stable sequences of hypergraphs}. These were first introduced in the PhD thesis of Abu-Kazneh~\cite{abu2016matchings}, and everything in this section reproves and expands on lemmas from there.
Informally, \emph{Ryser-stable hypergraph} means ``a single step of a case-analytic proof of Ryser's Conjecture'', whereas  \emph{Ryser-stable sequences of hypergraphs} means ``a case-analytic proof of Ryser's Conjecture''. The full definitions are as follows:
\begin{definition}
Let $\mathcal H, \mathcal H_1, \dots, \mathcal H_{\ell}$ be  $r$-uniform, $r$-partite hypergraphs.
We say that $\mathcal H$ is  $c$-Ryser-stable relative to $\mathcal H_1, \dots, \mathcal H_{\ell}$ if there is a vertex cover $C$ of $\mathcal H$ of size $c$ such that for any $r$-uniform, $r$-partite hypergraph $\mathcal H'$ containing $\mathcal H$, at least one of the following holds
\begin{itemize}
\item $\mathcal H'$ is covered by $C$.
\item $\mathcal H'$ contains a copy of $\mathcal H_i$ for some $i$.
\end{itemize} 
\end{definition}
If $C$ is a cover as in the above definition, we say that $C$ \emph{witnesses} the $c$-Ryser stability of $\mathcal H$ relative to $\mathcal H_1, \dots, \mathcal H_{\ell}$. Knowing that a hypergraph $\mathcal H$ is $c$-Ryser-stable relative to $\mathcal H_1, \dots, \mathcal H_{\ell}$ shows that all hypergraphs containing $\mathcal H$ either satisfy Ryser's Conjecture or that they contain one of the other hypergraphs $\mathcal H_1, \dots, \mathcal H_{\ell}$.  This can be used a single step in a case-analytic proof of the conjecture by chaining together several Ryser-stable hypergraphs.
\begin{definition}
Let $\mathcal H_1, \dots, \mathcal H_n, \mathcal R_1, \dots, \mathcal R_k$ be   $r$-uniform, $r$-partite hypergraphs.
We say that $\mathcal H_1, \dots, \mathcal H_n$ is a $c$-Ryser-stable sequence relative to  $\mathcal R_1, \dots, \mathcal R_k$ if for each $i=1, \dots, n$, the hypergraph $\mathcal H_{i}$ is $c$-Ryser-stable relative to $\mathcal H_1, \dots, \mathcal H_{i-1}, \mathcal R_1, \dots, \mathcal R_k$.
\end{definition}

\begin{figure}[h]
 \centering
    \includegraphics[width=0.9\textwidth]{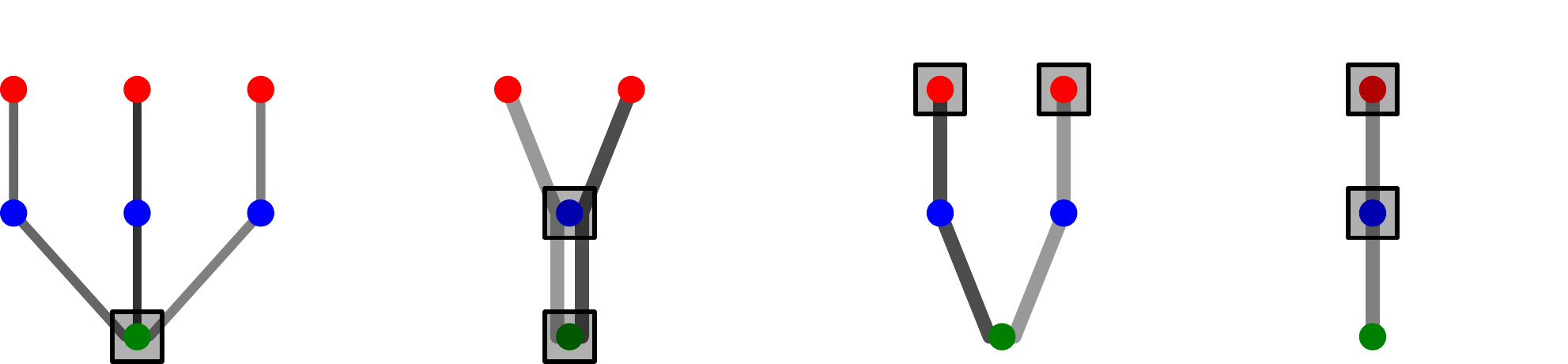}
  \caption{\footnotesize An example of a sequence of 3-uniform, 3-partite hypergraphs that is $2$-Ryser-stable sequence relative to $\mathcal{M}_{3, 2}$ (the 3-uniform matching of 2 edges). The   covers witnessing 2-Ryser stability are given by square vertices in each hypergraph.}
\label{FigureColouredGraph}
\end{figure}

Using the Sunflower Lemma, we can prove that Ryser's Conjecture is equivalent to the existence of Ryser-stable sequences. 
 Let $\mathcal S_{r}(s,t)$ denote the $r$-uniform sunflower with core of size $s$ and with $t$ petals, which is defined as follows: the vertex set   $V(\mathcal S_{r}(s,t))=C\cup P_1\cup \dots\cup P_t$ where $|C|=s$, $|P_1|=\dots=|P_t|=r-s$, and these sets are all disjoint. The edges of $\mathcal S_{r}(s,t)$ are the $t$ sets $C\cup P_1, \dots, C\cup P_t$. The set $C$ is called the core of the sunflower and the sets $P_i$ the petals. The following fundamental theorem shows that all hypergraphs with enough edges contain sunflowers.
 \begin{theorem}[Erd\H{o}s-Rado Sunflower Lemma]
 Every $r$-uniform hypergraph with $\geq r!t^{r+1}$ edges contains a copy of $\mathcal S_{r}(s,t)$ for some $s$.
 \end{theorem}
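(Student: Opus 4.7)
The plan is to prove the Sunflower Lemma by induction on the uniformity $r$, so I first set up the inductive framework and then handle one step.

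\textbf{Base case.} For $r=1$, an $r$-uniform hypergraph with at least $1!\cdot t^{2}=t^{2}\geq t$ edges is a family of $\geq t$ singletons, and any $t$ of them form a sunflower $\mathcal{S}_{1}(0,t)$ with empty core.

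\textbf{Inductive step.} Assume the lemma holds for $(r-1)$-uniform hypergraphs and let $\mathcal{H}$ be an $r$-uniform hypergraph with $|E(\mathcal{H})|\geq r!\,t^{r+1}$. The key dichotomy is to look at a maximum matching $M$ in $\mathcal{H}$. If $|M|\geq t$, then $t$ of its edges already form a sunflower with empty core, so $\mathcal{S}_{r}(0,t)\subseteq \mathcal{H}$ and we are done. Otherwise $|M|\leq t-1$, so $|V(M)|\leq r(t-1)<rt$, and by the maximality of $M$ every edge of $\mathcal{H}$ meets $V(M)$. By averaging, some vertex $v\in V(M)$ lies in at least
\[
\frac{r!\,t^{r+1}}{rt}=(r-1)!\,t^{r}=(r-1)!\,t^{(r-1)+1}
\]
edges of $\mathcal{H}$. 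Form the link $\mathcal{L}_{v}:=\{e\setminus\{v\}:v\in e\in E(\mathcal{H})\}$, an $(r-1)$-uniform hypergraph with at least that many edges, and apply the induction hypothesis to get a sunflower $\mathcal{S}_{r-1}(s',t)\subseteq \mathcal{L}_{v}$ for some $s'$. Re-attaching $v$ to every petal yields a sunflower $\mathcal{S}_{r}(s'+1,t)\subseteq \mathcal{H}$, completing the induction.

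\textbf{Where the work sits.} There is essentially no obstacle here: the whole argument is the classical Erdős--Rado proof and the bound $r!\,t^{r+1}$ is chosen to make the one-step pigeonhole (divide by $rt$, leaving $(r-1)!\,t^{r}$) come out cleanly. The only mild point of care is keeping track of the exponent of $t$ through the induction, which is why the statement uses $t^{r+1}$ rather than the sharper $t^{r}$; this looseness costs nothing in the applications later in the paper. No additional definitions or external lemmas beyond standard pigeonhole and induction are needed.
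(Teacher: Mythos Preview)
The paper does not supply its own proof of this statement: the Sunflower Lemma is quoted as a classical result and used as a black box. Your argument is the standard Erd\H{o}s--Rado induction on the uniformity $r$ (maximum matching dichotomy, then pigeonhole onto a vertex of the matching and pass to the link), and it is correct as written; there is nothing to compare against.
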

 We'll also need the following simple lemma for finding matchings in hypergraphs with large sunflowers.
 \begin{lemma}\label{Lemma_matching_out_of_large_sunflowers}
Let $S_1, \dots, S_a$ be core disjoint sunflowers with $(\nu+1)r$ petals. Let $M$ be a size $\nu-a+1$ matching disjoint from the cores of $S_1, \dots, S_a$. Then $\nu(S_1\cup\dots\cup S_a\cup M)\geq \nu+1$.
\end{lemma}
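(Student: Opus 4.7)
The plan is a straightforward induction on $a$, the number of sunflowers. The base case $a=0$ is trivial: $M$ itself is a matching of size $\nu+1$. For the inductive step, given sunflowers $S_1,\dots,S_a$ and a matching $M$ of size $\nu-a+1$ disjoint from the cores, I will produce a single edge $e_a\in S_a$ which is disjoint from $V(M)$ and from the cores $C_1,\dots,C_{a-1}$. Once I have such $e_a$, the set $M':=M\cup\{e_a\}$ is a matching of size $\nu-(a-1)+1$ that remains disjoint from the cores of $S_1,\dots,S_{a-1}$, so the inductive hypothesis applied to $S_1,\dots,S_{a-1}$ and $M'$ produces a matching of size $\nu+1$ inside $S_1\cup\dots\cup S_{a-1}\cup M'\subseteq S_1\cup\dots\cup S_a\cup M$, finishing the argument.

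The heart of the proof is producing $e_a$. Writing $e_a=C_a\cup P_a$ with $P_a$ a petal of $S_a$, the core $C_a$ is automatically disjoint from $V(M)$ (hypothesis) and from $C_1,\dots,C_{a-1}$ (core-disjointness), so my only task is to choose a petal $P_a$ of $S_a$ avoiding the ``forbidden'' set $F=V(M)\cup\bigcup_{j<a}C_j$. The crucial structural fact is that petals within a single sunflower are pairwise disjoint, so each vertex of $F$ lies in at most one petal of $S_a$ and therefore kills at most one candidate petal. A quick count using $|V(M)|=r(\nu-a+1)$ and $|C_j|\le r-1$ yields $|F|\le r\nu-a+1$, leaving at least $(\nu+1)r-(r\nu-a+1)=r+a-1\ge 1$ admissible petals, any one of which provides $P_a$.

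I do not anticipate a serious obstacle. The only genuine subtlety, and the only place the sunflower structure is really used, is the observation that each vertex of $F$ kills at most one petal of $S_a$; a cruder bound in which a single vertex of $F$ could destroy many petals would be far too wasteful. The hypothesis of $(\nu+1)r$ petals per sunflower is calibrated precisely so that, after subtracting the contributions of $V(M)$ and of the other cores, at least one good petal always survives at every step of the induction.
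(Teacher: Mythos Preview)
Your argument is correct and is essentially the same greedy/pigeonhole argument as the paper's: pick one edge from each sunflower whose petal avoids a forbidden set of size strictly less than $(\nu+1)r$, using that petals within a sunflower are pairwise disjoint so each forbidden vertex kills at most one petal. The only cosmetic difference is that you organise the selection as an induction on $a$ (picking $e_a$ first and recursing on $S_1,\dots,S_{a-1}$ with the enlarged matching $M'$), whereas the paper picks $e_1,\dots,e_a$ iteratively in forward order and verifies at the end that $M\cup\{e_1,\dots,e_a\}$ is a matching; the counting and the key observation are identical.
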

\begin{proof}
 Let $A_i$ be the core of $S_i$ and $B_i=\bigcup_{j\neq i} A_j$ be the union of the cores outside $A_i$, noting that $A_i\cap B_i=\emptyset$.
 Pick edges $e_1\in S_1, \dots, e_a\in S_a$ one by one with $(e_i\setminus A_i)\cap (V(M+e_1+\dots+e_{i-1})\cup B_i)=\emptyset$. 
 This is possible, since there $(\nu+1)r$ choices for each $e_i$, for all these choices $e_i\setminus A_i$  are vertex-disjoint, and $|V(M+e_1+\dots+e_{i-1})\cup B_i|\leq \nu r<(\nu+1)r$. 
 
 Now $M'=M+e_1+\dots+e_{a}$ is a size $\nu+1$ matching. Indeed, since $M$ is a matching, the only way there could be an intersection is if $e_i\cap f \neq \emptyset$ for some $f\in M+e_1+\dots+e_{i-1}$. By ``$(e_i\setminus A_i)\cap (V(M+e_1+\dots+e_{i-1})\cup B_i)=\emptyset$'' we have that $e_i\cap f\subseteq A_i$. Since $M$ is disjoint from the cores of $S_1, \dots, S_a$, we have that $f=e_j$ for some $j$. But we have $A_j\cap A_i=\emptyset$ (since $S_j, S_i$ are core-disjoint) and $(e_j\setminus A_j)\cap A_i=\emptyset$ (by choice of $e_j$). Plugging these into $e_j=A_j\cup (e_j\setminus A_j)$ gives $e_j\cap A_i=\emptyset$, which is a contradiction.
\end{proof}
 
 Use $\mathcal{M}_{r, \nu}$ to denote an $r$-uniform matching of size $\nu$ (so $\mathcal{M}_{r, \nu}=\mathcal S_{r}(0,\nu)$). Use $\mathcal E_r$ to denote the single-edge $r$-uniform hypergraph.
The following is the main result of this section. It is an extension of Corollary 2.7 from~\cite{abu2016matchings}.
\begin{lemma}\label{Lemma_RyserStabilityEquivalence}
For fixed $\nu, r\in \mathbb N$, the following are equivalent:
\begin{enumerate}[(i)]
\item Every $r$-uniform, $r$-partite hypergraph $\mathcal H$ with matching number $\nu$ has $\tau(\mathcal H)\leq (r-1)\nu$.
\item For some $\ell\leq 2^{(r+\nu)^{2r}}$, there exists a $(r-1)\nu$-Ryser-stable sequence $\mathcal H_1, \dots, \mathcal H_{\ell}$ relative to $\mathcal{M}_{r, \nu+1}$ such that the last hypergraph $\mathcal H_{\ell}$ in the sequence is just a single edge $\mathcal E_r$. 
\end{enumerate}
\end{lemma}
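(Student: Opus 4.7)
The plan is to prove both directions of the equivalence. The forward direction (ii)~$\Rightarrow$~(i) is a short induction, while (i)~$\Rightarrow$~(ii) requires an explicit construction that combines assumption (i) with the Sunflower Lemma.

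For (ii)~$\Rightarrow$~(i), I would argue by induction along the sequence. Given an $r$-partite, $r$-uniform hypergraph $\mathcal{H}$ with $\nu(\mathcal{H}) \leq \nu$, I want to show $\tau(\mathcal{H}) \leq (r-1)\nu$. If $\mathcal{H}$ has no edges this is trivial. Otherwise $\mathcal{H}$ contains $\mathcal{H}_\ell = \mathcal{E}_r$, so the $(r-1)\nu$-Ryser-stability of $\mathcal{H}_\ell$ relative to $\mathcal{H}_1, \dots, \mathcal{H}_{\ell-1}, \mathcal{M}_{r,\nu+1}$, applied with $\mathcal{H}' := \mathcal{H}$, yields three possibilities: either $\mathcal{H}$ is covered by the prescribed set of size $(r-1)\nu$ (and we are done), or $\mathcal{H}$ contains $\mathcal{M}_{r,\nu+1}$ (impossible since $\nu(\mathcal{H}) \leq \nu$), or $\mathcal{H}$ contains some $\mathcal{H}_i$ with $i < \ell$. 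In the last case I iterate using the Ryser-stability of $\mathcal{H}_i$. Since the index strictly decreases, the process terminates in a cover of the required size.

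For (i)~$\Rightarrow$~(ii), the plan is to enumerate ``small'' hypergraphs and show that together they assemble into a Ryser-stable sequence. Set $N = (r+\nu)^2$ and consider all $r$-partite, $r$-uniform hypergraphs on a fixed vertex set $V_1 \cup \dots \cup V_r$ with $|V_j| = N$ each; there are at most $2^{N^r} = 2^{(r+\nu)^{2r}}$ of them. Discard those with matching number exceeding $\nu$, and for each surviving $\mathcal{H}$ fix a cover $C_{\mathcal{H}}$ of size $(r-1)\nu$ guaranteed by assumption (i). Order the survivors by decreasing number of edges, placing $\mathcal{H}_\ell = \mathcal{E}_r$ at the end. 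What remains is to verify that each $\mathcal{H}_i$ together with $C_{\mathcal{H}_i}$ is $(r-1)\nu$-Ryser-stable relative to $\mathcal{H}_1, \dots, \mathcal{H}_{i-1}, \mathcal{M}_{r,\nu+1}$.

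The main obstacle is precisely this verification. Given an extension $\mathcal{H}' \supseteq \mathcal{H}_i$ that is not covered by $C_{\mathcal{H}_i}$ and contains no copy of $\mathcal{M}_{r,\nu+1}$, I need to exhibit an earlier $\mathcal{H}_j$ inside $\mathcal{H}'$. My approach is to locate a sub-hypergraph $\mathcal{H}^* \subseteq \mathcal{H}'$ containing $\mathcal{H}_i$, containing some edge of $\mathcal{H}' \setminus \mathcal{H}_i$ that avoids $C_{\mathcal{H}_i}$, and using at most $N$ vertices per part; such $\mathcal{H}^*$ will lie (up to isomorphism) in our enumeration and have strictly more edges than $\mathcal{H}_i$, hence will appear as an earlier $\mathcal{H}_j$. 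The key tool here is the Erd\H{o}s--Rado Sunflower Lemma combined with Lemma~\ref{Lemma_matching_out_of_large_sunflowers}: whenever $\mathcal{H}'$ has many vertices in some part beyond $V(\mathcal{H}_i)$, I iteratively extract sunflowers with $(\nu+1)r$ petals and keep only a bounded number of representative petals per sunflower. Lemma~\ref{Lemma_matching_out_of_large_sunflowers} ensures that more than $\nu$ core-disjoint such sunflowers would produce a matching of size $\nu+1$, a contradiction; hence the compression terminates quickly, and an arithmetic check shows the resulting $\mathcal{H}^*$ fits within $N = (r+\nu)^2$ vertices per part, completing the verification and yielding the bound $\ell \leq 2^{(r+\nu)^{2r}}$.
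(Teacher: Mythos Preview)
Your direction (ii)~$\Rightarrow$~(i) is correct and matches the paper's argument.

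For (i)~$\Rightarrow$~(ii), however, there is a genuine gap. You order the enumerated hypergraphs by decreasing edge count and, given an extension $\mathcal H'\supseteq\mathcal H_i$ with an uncovered edge $e$, you seek $\mathcal H^\ast\subseteq\mathcal H'$ that \emph{contains} $\mathcal H_i$, contains $e$, and fits in $N$ vertices per part. But $\mathcal H_i$ may already use all $N$ vertices of some part (a single sunflower with $N$ petals and core in another part does this while keeping $\nu=1$), and then $\mathcal H_i+e$ needs $N+1$ vertices there. Your proposed fix---extract sunflowers and ``keep only a bounded number of representative petals''---\emph{deletes} edges, so the resulting $\mathcal H^\ast$ need no longer satisfy $e(\mathcal H^\ast)>e(\mathcal H_i)$ and hence need not appear earlier in your ordering. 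Separately, $N=(r+\nu)^2$ is far below the Sunflower Lemma threshold $r!\big((\nu+1)r\big)^{r+1}$, so the sunflower extraction you invoke is not even guaranteed to fire.

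The paper circumvents this by \emph{not} taking arbitrary covers from (i) and \emph{not} ordering by edge count. Its sequence consists only of ``$(a,b)$-basic'' hypergraphs: $a$ core-disjoint sunflowers with $(\nu+1)r$ petals each, together with a sunflower-free remainder $R$ with $e(R)=b$ and $\nu(R)\le\nu-a$. The witnessing cover is deliberately chosen to be (union of sunflower cores)~$\cup$~(a minimum cover of $R$); assumption (i) is applied only to $R$. Because the cores lie in the cover, any uncovered edge $e$ is automatically disjoint from all cores, and then one cleanly argues that $R+e$ either spawns a new large sunflower (raising $a$), raises $\nu(R)$ (yielding $\mathcal M_{r,\nu+1}$ via Lemma~\ref{Lemma_matching_out_of_large_sunflowers}), or simply raises $b$; the lexicographic order on $(a,b)$ then gives the required progress. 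The structural choice of cover is exactly what your arbitrary-cover approach is missing.
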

\begin{proof}
(i) $\implies$ (ii): 
Let $R(a,b)$ be the set of hypergraphs $R$ with $\nu(R)\leq \nu-a$, $e(R)=b$, and containing no sunflower with $(\nu+1)r$ petals and non-empty core. By the Sunflower Lemma, we have that $R(a,b)$ is empty for $b\geq r!((\nu+1)r)^{r+1}$.
Say that a hypergraph is $(a,b)$-\emph{basic} if it has no isolated vertices and consists of $a$ core-disjoint sunflowers $S_1, \dots, S_a$ with nonempty cores and $(\nu+1)r$ petals each, and a hypergraph $R\in R(a,b)$ that is disjoint from the cores of $S_1, \dots, S_a$.
\begin{claim}
$(a,b)$-\emph{basic} hypergraphs $\mathcal H$ have  $\nu(\mathcal H)\leq \nu$.
\end{claim}
\begin{proof}
A matching can use at most one edge from each $S_1, \dots, S_a$, since sunflowers with nonempty cores are intersecting. A matching can use at most $\nu(R)\leq \nu-a$ edges of $R$. So it total, it can have at most $a+(\nu-a)\leq \nu$ edges.
\end{proof}

Let $F(a,b)$ be the set of all $(a,b)$-basic hypergraphs listed in an arbitrary order.  
Let $b_0=r!((\nu+1)r)^{r+1}$. Recall the lexicographic ordering on pairs is defined by $(a,b)\succ (a',b')$ if $a>a'$ or if $a=a'$ and $b>b'$.
Consider the sequence formed by concatenating the sequences  $F(a,b)$ for $(a,b)\in [\nu]\times [b_0]$ according to ``$\succ$'' i.e. the sequence

\begin{align*}
F(\nu, b_0), \  F(\nu, b_0-1), \dots, F(\nu, 0), \\
F(\nu-1, b_0), F(\nu-1, b_0-1), \dots, F(\nu-1, 0), \\
\vdots \hspace{1cm}\\
F(0, b_0),\ F(0, b_0-1), \dots, F(0, 1).
\end{align*} 
The length of the sequence is clearly less than the number of hypergraphs with $E(\mathcal H)\leq b_0+\nu(\nu+1)r$  and $V(\mathcal H)\leq (b_0+\nu(\nu+1)r)r$ which is $\leq \binom{(b_0+\nu(\nu+1)r)r}r^{b_0+\nu(\nu+1)r}\le ((b_0+\nu(\nu+1)r)r)^{(b_0+\nu(\nu+1)r)r}=   2^{(b_0+\nu(\nu+1)r)r\log((b_0+\nu(\nu+1)r)r)}\leq 2^{(r+\nu)^{2r}}$.
The sequence  ends with the single-edge hypergraph $\mathcal E_r$ , since that is the only hypergraph in $F(0,1)$.
It remains to prove that this is a $(r-1)\nu$-Ryser-stable sequence relative to  $\mathcal{M}_{r, \nu+1}$. This is immediate from   the following claim.
\begin{claim}
Let  $\mathcal H\in F(a,b)$. Then $\mathcal H$ is $(r-1)\nu$-Ryser-stable  relative to $\mathcal{M}_{r, \nu+1}$ together with all the hypergraphs in $\bigcup_{(a',b')\succ (a,b) \text{ with } (a',b')\in [\nu]\times [b_0]}F(a',b')$.
\end{claim}
\begin{proof}
By definition of $\mathcal H\in F(a,b)$, we have $\mathcal H=S_1\cup \dots\cup S_a \cup R$ where $S_i$ are sunflowers (with nonempty core and $\geq (\nu+1)r$ petals) and $R\in R(a,b)$ (which is disjoint from the cores of $S_1, \dots, S_a$).

Define $C_1\subseteq V(\mathcal H)$ to consist of the union of the  cores of $S_1\cup \dots\cup S_a$ and define $C_2\subseteq V(\mathcal H)$ to be  a minimal cover of $R$. Let $C=C_1\cup C_2.$ We'll show that $C$ is a cover that witnesses the Ryser-stability of $\mathcal H$ (which proves the claim).

It is immediate that $C$ is a cover (since the sunflowers $S_i$ have nonempty cores). 
Since $R\in R(a,b)$, we have $\nu(R)\leq \nu-a$.
By (i) we have $|C_2|\leq (r-1)\nu(R)\leq (r-1)(\nu-a)$. Since a sunflower with $\geq 2$ petals has core at most $r-1$, we have $|C_1|\leq  (r-1)a$. These give $|C|=|C_1|+|C_2|\leq (r-1)a+ (r-1)(\nu-a)= (r-1)\nu$.

Let $\mathcal H'$ be a hypergraph containing $\mathcal H$ which isn't covered by $C$. Since $\mathcal H'$ is not covered by $C\supseteq C_1$, it contains some edge $e\not\in \mathcal H$ which is disjoint from the cores of $S_1, \dots, S_a$. Because of this, one of the following is true.
\begin{enumerate}[(1)]
\item $R+e$ contains a  sunflower with $(\nu+1)r$ petals and non-empty core. Then   $\mathcal H+e$ contains $a+1$ core-disjoint sunflowers with $(\nu+1)r$ petals. If $a+1\leq \nu$, then this would show that $\mathcal H+e$ contains a member of $F(a+1,0)$. If $a+1\geq \nu+1$, Lemma~\ref{Lemma_matching_out_of_large_sunflowers} applied with $M=\emptyset$ gives a size $\nu+1$ matching $\mathcal M_{r,\nu+1}$.
\item $\nu(R+e)=\nu-a+1$. Let $M$ be a matching in $R+e$ of this size, noting that it is core-disjoint from $S_1, \dots, S_a$. Lemma~\ref{Lemma_matching_out_of_large_sunflowers} gives a size $\nu+1$ matching $\mathcal M_{r,\nu+1}$.
\item $R+e\in R(a, b+1)$. Then we have $b+1\leq b_0$ (since otherwise $R+e$ would have  a size $(\nu+1)r$ sunflower by the Sunflower Lemma), and so $\mathcal H+e\in F(a, b+1)$.  
\end{enumerate}
Thus we have shown that in any of the cases, $\mathcal H'$ contains either $\mathcal M_{r, \nu+1}$ or a member of $F(a',b')$ with $(a',b')\succ(a,b)$ and $(a',b')\in[\nu]\times[b_0]$ as required.
\end{proof}

(ii) $\implies$ (i): Let  $\mathcal H_1, \dots, \mathcal H_{\ell}$  be a $(r-1)\nu$-Ryser-stable sequence relative to $\mathcal{M}_{r, \nu+1}$ with $\mathcal H_{\ell}=\mathcal E_r$.
Let $\mathcal H$ be a hypergraph with $\nu(\mathcal H)\leq \nu$. If $\mathcal H$ has no edges, then $\tau(\mathcal H)=0\leq (r-1)\nu$, so assume that it has edges.
Let $i$ be minimal for which there is an isomorphic copy $\mathcal H_i\subseteq \mathcal H$ (it exists because $\mathcal H_{\ell}=\mathcal E_r$ which every hypergraph contains). There is some cover $C$ witnessing $(r-1)\nu$-Ryser-stability of $\mathcal H_i$ relative to $\mathcal H_1, \dots, \mathcal H_{i-1}, \mathcal{M}_{r, \nu+1}$. In particular $|C|\leq (r-1)\nu$. If $C$ covers $\mathcal H$, then we are done. Otherwise, by the definition of Ryser-Stability, $\mathcal H$ contains a copy of one of $\mathcal H_1, \dots, \mathcal H_{i-1}, \mathcal{M}_{r, \nu+1}$. It cannot contain a copy of $\mathcal H_1, \dots, \mathcal H_{i-1}$ by minimality of $i$. So it contains $\mathcal{M}_{r, \nu+1}$, contradicting $\nu(\mathcal H)\leq \nu$.

\end{proof}

\section{Metric spaces}\label{Section_metrics}
In this paper we deal with finite metric spaces i.e.  pairs $(V, d)$ where $V$ is a finite set and $d:V\times V\to \mathbb{R}$ is a function satisfying the axioms of a metric. 
The set $V$ will be referred to as  the \emph{vertex set of the metric}. 
Two metric spaces $(V,d)$ and $(V',d')$ are isomorphic if there is a bijection $\theta:V\to V'$ between their vertex sets for which $d=d'\circ \theta$.
Given two metrics $(V,d)$ and $(V',d')$ we say that $(V,d)$ contains an \emph{isomorphic copy} of $(V',d')$ if there is some subset $S\subseteq V$ with $(S,d)$ isomorphic to $(V',d')$

We say that $(V, d_1, \dots, d_r)$ is a \emph{family of metrics with common vertex set} if  $(V, d_1), \dots, (V, d_r)$ are metrics  on the set $V$.

Recall that for a connected graph $G$, we can define the \emph{graph metric} $d_G: V(G)\times V(G)\to \mathbb{R}$ where $d_G(x,y)$ is the length of the shortest $x$ to $y$ path in $G$. We'll need a version of   the graph metric for disconnected graphs as well. Perhaps the most natural definition is to set ``$d_G(x,y)=\infty$'' when $x$ and $y$ are in different components. However, to avoid infinite quantities, we'll instead use the following finite version:
\begin{equation}\label{Eq_GraphMetric}
d_G(x,y)=\begin{cases}\text{length of shortest $x$ to $y$ path if it exists}\\ \text{$|V(G)|$ otherwise}\end{cases}
\end{equation}
Note that for connected graphs $d_G$ is just the usual graph metric. It is also a metric in general.
\begin{proposition}\label{Proposition_Metric}
For any graph $G$, $d_G:V(G)\times V(G)\to \mathbb{R}$ is a metric
\end{proposition}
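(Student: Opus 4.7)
The plan is to verify the three metric axioms (non-negativity with $d_G(x,y)=0 \iff x=y$, symmetry, and the triangle inequality) directly from the definition in~\eqref{Eq_GraphMetric}. Non-negativity is immediate since both the length of a shortest path and $|V(G)|$ are non-negative. Symmetry holds because paths can be traversed in either direction, and the ``fallback'' value $|V(G)|$ does not depend on the order of the arguments. For $d_G(x,x)=0$ we use the length-$0$ path. To see $d_G(x,y)>0$ when $x\neq y$: if $x,y$ lie in the same component then any $x,y$ path has length $\geq 1$; otherwise $d_G(x,y)=|V(G)|\geq 2$.

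The substantive step is the triangle inequality $d_G(x,z)\leq d_G(x,y)+d_G(y,z)$. The key observation I would isolate first is that $|V(G)|$ is a strict upper bound on every ``real'' distance: if $x$ and $z$ lie in the same component of $G$ then the shortest $x$-$z$ path uses at most $|V(G)|-1$ edges, so $d_G(x,z)\leq |V(G)|-1$; and if they lie in different components then by definition $d_G(x,z)=|V(G)|$. In particular $d_G\leq |V(G)|$ everywhere, and the only way $d_G$ achieves the value $|V(G)|$ is when the two endpoints are in different components.

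With this observation in hand I would split into three cases according to the component structure of $\{x,y,z\}$. If all three lie in the same component, the inequality reduces to the standard triangle inequality for shortest paths in a connected graph. If $x$ and $z$ lie in the same component but $y$ lies in a different one, then both $d_G(x,y)$ and $d_G(y,z)$ equal $|V(G)|$, so their sum is at least $|V(G)|\geq d_G(x,z)$. Finally, if $x$ and $z$ lie in different components, then $d_G(x,z)=|V(G)|$ and $y$ must be in a different component from at least one of $x,z$, so at least one of $d_G(x,y), d_G(y,z)$ equals $|V(G)|$, making the right-hand side at least $|V(G)|$ as well.

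I do not anticipate a real obstacle: the only subtlety is choosing the fallback value large enough to absorb all possible ``real'' distances, which is precisely why $|V(G)|$ (rather than, say, the diameter of a component) is the right constant in~\eqref{Eq_GraphMetric}. The proof is a short, clean case analysis, and no deeper structural property of $G$ is needed.
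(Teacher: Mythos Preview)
Your proof is correct and follows essentially the same approach as the paper: both verify the triangle inequality by a case analysis on the component structure of $\{x,y,z\}$, using that $|V(G)|$ dominates every genuine shortest-path distance. Your organization into three cases is slightly more economical than the paper's five, and you additionally spell out the other metric axioms, but the argument is the same.
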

\begin{proof}
We need to show that $d(x,y)\leq d(x,z)+d(z,y)$ for all $x,y,z$.
If $x,y,z$ are in the same component then this holds due to the usual graph metric being a metric. 
If $x,y$ are in the same component and $z$ is in a different component then we have $d(x,y)\leq 2|V(G)|= d(x,z)+d(z,y)$.
 If $x$ and $z$ are in the same component and $y$ in a different one then $d(x,y)= |V(G)|\leq d(x,z)+|V(G)|= d(x,z)+d(z,y)$.
 If $y$ and $z$ are in the same component and $x$ in a different one then $d(x,y)= |V(G)|\leq d(y,z)+|V(G)|= d(x,z)+d(z,y)$.
 If $x,y$ and $z$ are all different components then $d(x,y)= |V(G)|\leq 2|V(G)|= d(x,z)+d(z,y)$.
\end{proof}
Now given any $r$-edge-coloured graph $G$, we can think of it as a family of $r$ metrics with vertex set $V(G)$. Indeed, letting $G_i$ be the subgraph of $G$ consisting of colour $i$ edges, we have that $(V(G), d_{G_1}, \dots, d_{G_r})$ is this family.

\section{Approximate duals of hypergraphs}\label{Section_approximate_duals}
In Section~\ref{Section_Hypergraph_Equivalence} we gave a duality between $r$-uniform, $r$-partite hypergraphs and $r$-coloured graphs. This duality can show that the hypergraph version of Ryser's Conjecture is equivalent to the coloured graph version. But it isn't strong enough to say anything about Conjecture~\ref{Conjecture_Milicevic} because  the equivalence doesn't notice the diameter of the monochromatic components it gives.  To remedy this we give a way of associating metric spaces to hypergraphs so that distances are taken into account.
\begin{definition}
Let $V$ be a set of $n$ vertices, and $d_1, \dots, d_r$ metrics on $V$ and $\mathcal H$ an $n$-edge, $r$-uniform, $r$-partite hypergraph. 
An injection $\phi: E(\mathcal H)\to V$ is called an $(a,b)$-duality if for distinct edges $e,f$:
\begin{itemize}
\item  $e$ and $f$ intersect in part $i$ $\implies$ $d_i(\phi(e),\phi(f))\leq a$.
\item  $e$ and $f$ don't intersect in part $i$ $\implies$ $d_i(\phi(e),\phi(f))\geq b$.
\end{itemize}
\end{definition}
We say that $(V, d_1, \dots, d_r)$ \emph{contains} an $(a,b)$-dual of $\mathcal H$ if there is some $(a,b)$-duality $\phi:E(\mathcal H) \to V$.
We say that $(V, d_1, \dots, d_r)$ \emph{is} an $(a,b)$-dual
 of $\mathcal H$ if additionally $\phi$ is a bijection. 
In all the $(a,b)$-dualities  we consider, we'll always have $a<b$. It is useful to note that when $a<b$, one can replace both ``$\implies$'' in the definition of ``$(a,b)$-duality'' with ``$\iff$''.

This definition has two easy monotonicity properties which we will use. 
\begin{observation}[Monotonicity of parameters]\label{Observation_monotonicity_subgraph}
For numbers $a\leq a'<b'\leq b$, family of metrics  $(V, d_1, \dots, d_r)$, and hypergraph $\mathcal H$, if $\phi:E(\mathcal H)\to V$ is an $(a,b)$-duality, then it is also a $(a',b')$-duality.
\end{observation}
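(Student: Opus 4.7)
The plan is to verify the two implications in the definition of $(a',b')$-duality by direct substitution, using that $\phi$ is already an $(a,b)$-duality together with the chain $a \leq a' < b' \leq b$, which simply weakens both bounds.

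First I would take arbitrary distinct edges $e, f \in E(\mathcal H)$ and split into the two cases given in the definition. If $e$ and $f$ intersect in part $i$, the $(a,b)$-duality gives $d_i(\phi(e), \phi(f)) \leq a$, and chaining with $a \leq a'$ yields $d_i(\phi(e), \phi(f)) \leq a'$, which is exactly what is required for $(a',b')$-duality on this side. If instead $e$ and $f$ do not intersect in part $i$, the $(a,b)$-duality gives $d_i(\phi(e), \phi(f)) \geq b$, and chaining with $b \geq b'$ yields $d_i(\phi(e), \phi(f)) \geq b'$, the other required inequality.

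Since both conditions hold for every pair of distinct edges, $\phi$ qualifies as an $(a',b')$-duality. There is no meaningful obstacle here — the whole argument reduces to transitivity of $\leq$ applied to the two inequalities supplied by the $(a,b)$-duality condition. The hypothesis $a' < b'$ does not enter the implications themselves; it is there to keep $(a',b')$ in the regime where the two implications in the definition of duality can be read as equivalences, as noted in the paragraph immediately preceding the observation.
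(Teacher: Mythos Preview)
Your proof is correct and matches the paper's own argument essentially line for line: both verify the two implications by chaining the $(a,b)$-duality bounds with $a\le a'$ and $b\ge b'$. Your closing remark about the role of $a'<b'$ is also accurate and aligns with the paper's comment preceding the observation.
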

\begin{proof}
Suppose $\phi:E(H)\to V$ satisfies the definition of $(a,b)$-duality. If $e,f$ intersect in part $i$, then $d_i(\phi(e), \phi(f))\leq a\leq a'$.
If $e,f$ don't intersect in part $i$, then $d_i(\phi(e), \phi(f))\geq b\geq b'$. 
\end{proof}

\begin{observation}[Monotonicity under hypergraph containment]\label{Observation_monotonicity_containment}
For a subhypergraph $\mathcal{H}'\subseteq \mathcal{H}$, if $(V, d_1, \dots, d_r)$ contains an $(a,b)$-dual of $\mathcal H$, then it also contains an $(a,b)$-dual of $\mathcal H'$ 
\end{observation}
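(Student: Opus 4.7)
The plan is to prove the observation by restriction: given an $(a,b)$-duality $\phi:E(\mathcal H)\to V$, the restriction $\phi':=\phi|_{E(\mathcal H')}$ should serve as an $(a,b)$-duality for $\mathcal H'$. First I would note that $\phi'$ remains an injection into $V$ because $\phi$ was, and restricting an injection to a subset of its domain keeps it injective.

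Next I would verify the two defining implications of an $(a,b)$-duality for $\phi'$. Fix distinct edges $e,f\in E(\mathcal H')\subseteq E(\mathcal H)$. The key (and only) point requiring comment is that the $r$-partition of $\mathcal H'$ is inherited from the $r$-partition of $\mathcal H$, so the statement ``$e$ and $f$ intersect in part $i$'' means the same thing whether $e,f$ are regarded as edges of $\mathcal H$ or of $\mathcal H'$. Given that, if $e$ and $f$ intersect in part $i$, the duality hypothesis applied inside $\mathcal H$ gives $d_i(\phi(e),\phi(f))\le a$, and since $\phi'(e)=\phi(e)$ and $\phi'(f)=\phi(f)$, the same inequality holds for $\phi'$. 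The converse case (non-intersection giving $d_i\ge b$) is symmetric.

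There is essentially no obstacle here; the observation is a bookkeeping statement which just says that the definition of $(a,b)$-duality is preserved under restricting to any sub-hypergraph. The only conceptual check is the parts-inheritance remark above, which is why Observation~\ref{Observation_monotonicity_containment} is stated as an observation rather than a lemma. Together with Observation~\ref{Observation_monotonicity_subgraph} it provides the basic monotonicity toolkit needed later for Lemma~\ref{Lemma_Stability_Transferrence}.
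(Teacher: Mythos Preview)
Your proposal is correct and matches the paper's own proof, which simply states that the restriction $\phi\big|_{E(\mathcal H')}$ of an $(a,b)$-duality $\phi$ is again an $(a,b)$-duality. Your version just spells out the routine verifications (injectivity, the two implications, inheritance of the $r$-partition) that the paper leaves implicit.
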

\begin{proof}
Let $\phi:E(\mathcal H)\to V$ be an $(a,b)$-duality. Then the restriction $\phi\big|_{E(\mathcal H')}$ is also an $(a,b)$-duality of $\mathcal H'$.
\end{proof}

The following lemma is the technical core of the paper. To understand it, compare the statement with the definition of Ryser stability. The conclusion of the lemma is very similar to that definition --- it guarantees one of two alternatives --- either an efficient cover or the containment of some object. The difference between the lemma and the definition of Ryser stability is that the lemma takes distances into account (via the terminology of $(a,b)$-duals), which is where we get control over distances in Theorem~\ref{Theorem_main}.
\begin{lemma}\label{Lemma_Stability_Transferrence}
Let $r\geq 2$ and $k> 2^{9r}$.
Suppose that $\mathcal H$ is $c$-Ryser-stable relative to $\mathcal H_1, \dots, \mathcal H_{\ell}$. 
Let $(V,d_1, \dots, d_r)$ be a family of metrics containing a $(m,km)$-dual of $\mathcal{H}$. Then one of the following holds:
\begin{itemize}
\item $V$ can be covered by $c$ radius $km$ balls chosen from the metrics  $d_1, \dots, d_r$.
\item $(V, d_1, \dots, d_r)$ contains a $(m', k^{\frac{1}{4r}}m')$-dual of one of $\mathcal H_1, \dots, \mathcal H_{\ell}$ for some $m\leq m'\leq km$.
\end{itemize}
\end{lemma}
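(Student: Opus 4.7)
My plan is to deduce the conclusion by applying Ryser-stability to a metric extension of $\mathcal H$. The overall strategy is: use the metric data on $V$ to build a hypergraph $\mathcal H^*\supseteq \mathcal H$ (with an injection $\phi^*:E(\mathcal H^*)\to V$), apply the Ryser-stability of $\mathcal H$ to $\mathcal H^*$, and translate each outcome back into one of the two alternatives in the lemma.

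First I would unpack the $(m,km)$-duality $\phi:E(\mathcal H)\to V$ by recording, for each part $i$ and each $u\in V_i(\mathcal H)$, the ``coordinate ball'' $B_i(u):=\{\phi(e):u\in e\}\subseteq V$. The hypothesis says that each $B_i(u)$ has $d_i$-diameter at most $m$, while $d_i(B_i(u),B_i(u'))\geq km$ for $u\neq u'$ within the same part. Consequently, at any scale $M<km/3$, at most one $u\in V_i(\mathcal H)$ can satisfy $d_i(x,B_i(u))\leq M$ for a given $x\in V$; call this the \emph{colour-$i$ label of $x$ at scale $M$}, when it exists.

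Next I would build $\mathcal H^*$ by fixing scales $M<M'$ with $M'/M=k^{1/(4r)}$, and, for each $x\in V\setminus\phi(E(\mathcal H))$, adding a new edge $e_x$ with $\phi^*(e_x):=x$ whose part-$i$ vertex is the colour-$i$ label of $x$ at scale $M$ when defined, and otherwise a fresh vertex obtained by clustering the ``colour-$i$ isolated'' points (those $x$ with $d_i(x,B_i(u))>M'$ for every $u$) into groups that are pairwise $d_i$-close below $M$. Then I would apply the Ryser-stability of $\mathcal H$ via the witnessing cover $C$ of size $c$. If $\mathcal H^*$ is covered by $C$, every $x$'s edge $e_x$ meets some $v\in C$, placing $x$ inside a $d_i$-ball of radius $M+m\leq km$ around any fixed point of $B_i(v)$, and the $c$ balls thereby produced give the first alternative. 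If instead $\mathcal H^*$ contains a copy of some $\mathcal H_j$, restricting $\phi^*$ to that copy gives an $(m',k^{1/(4r)}m')$-dual of $\mathcal H_j$ by Observation~\ref{Observation_monotonicity_containment}.

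The main obstacle is the pigeonhole step that actually selects the scales $M$ and $M'$. The duality of $\phi^*$ on $\mathcal H^*$ breaks whenever some $x\in V$ has $d_i(x,B_i(u))$ in an ``intermediate'' zone $(M,M']$, since then $x$ resists being assigned a consistent part-$i$ label and blurs the distinction between the ``intersect $\leq m'$'' and the ``don't intersect $\geq k^{1/(4r)}m'$'' cases. Only $4r$ geometric scales of ratio $k^{1/(4r)}$ fit in $[m,km]$, and the hypothesis $k>2^{9r}$ is tailored so that the four distinct scale roles per colour (the label-versus-isolated threshold, and the intra-versus-inter threshold for fresh-vertex clustering) can all be simultaneously placed in clean gaps. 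The hard part will be showing that either such a clean scale pair exists, or any stubborn intermediate points can be absorbed into a side structure that itself produces an $(m',k^{1/(4r)}m')$-dual of some $\mathcal H_j$ directly (perhaps via a sunflower or matching argument on the intermediate vertices). The $9r$ slack in the assumption $k>2^{9r}$ is precisely what this two-phase accounting consumes.
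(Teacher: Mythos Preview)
Your approach has a genuine gap at exactly the place you flag as the main obstacle, and the paper's proof avoids it by a structural simplification you are missing. You attempt to extend $\phi$ to an edge $e_x$ for \emph{every} $x\in V\setminus\phi(E(\mathcal H))$ and therefore need a single scale pair $(M,M')$ that simultaneously sidesteps the intermediate zone of every $x$ in every colour. Since $|V|$ is unbounded, the pigeonhole on $4r$ geometric windows inside $[m,km]$ says nothing: each $x$ may place its intermediate values differently, and there is no reason any window is clean for all of them at once. Your proposed fallback---absorbing stubborn intermediate points via a sunflower or matching argument---is pure speculation; the sunflower machinery in this paper lives in an entirely different lemma (the construction of Ryser-stable sequences) and there is no evident mechanism by which it would rescue a failed global scale choice here. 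The ``clustering of isolated points'' into fresh vertices also has no reason to respect the $(m',k^{1/(4r)}m')$-duality conditions between two new edges $e_x,e_y$.

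The key idea you are missing is that you only need to add \emph{one} edge. The paper first fixes the witnessing cover $C=\{u_1,\dots,u_c\}$, picks one edge $e_j$ through each $u_j$, and tests whether the $c$ balls $B_{km}(\phi(e_j))$ (in the appropriate metrics) already cover $V$; if so, the first alternative holds immediately. If not, take a \emph{single} uncovered vertex $v$ and add one new edge $e'$ with $\phi'(e')=v$. Now the pigeonhole is trivial, because there are only the $r$ numbers $m_t=\min_{f\in E(\mathcal H)}d_t(\phi(f),v)$ to place: among the $r+1$ multiplicative gaps between $m_0:=m\le m_1\le\dots\le m_r\le m_{r+1}:=km$ one has ratio at least $k^{1/(3r)}$ while keeping $m_{t'}\le\sqrt{k}\,m$ (this is the content of the paper's Claim~\ref{Claim_m}). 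The threshold $t'$ so obtained dictates, colour by colour, whether $e'$ reuses the part-$t$ vertex of the nearest edge $f_t$ (when $t\le t'$) or gets a fresh vertex $u_t^*$ (when $t>t'$). One then checks directly that $\phi'$ is an $(m',k^{1/(4r)}m')$-duality of $\mathcal H+e'$ for $m'=m_{t'}+m$, and since $C$ cannot cover $e'$ (else $v$ would lie in one of the balls $B_{km}(\phi(e_j))$), Ryser-stability yields some $\mathcal H_j\subseteq \mathcal H+e'$, to which $\phi'$ restricts as the desired dual.
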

\begin{proof}
Without loss of generality, we may suppose that $\mathcal H$ has no isolated vertices (given a family of metrics which contain a $(a,b)$-dual of hypergraph, they also contain a $(a,b)$-dual to that hypergraph minus any isolated vertices, via Observation~\ref{Observation_monotonicity_containment}).
Let $C=\{u_1, \dots, u_c\}$ be a cover witnessing the $c$-Ryser-stability of $\mathcal H$ relative to $\mathcal H_1, \dots, \mathcal H_{\ell}$.
Let $\phi:E(\mathcal{H})\to V$ be an $(m,km)$-duality (which exists because $(V, d_1, \dots, d_r)$  contains an $(m,km)$-dual of $\mathcal H$).
For each $j=1, \dots, c$ let $p_j$ be the partition containing $u_j$, and let $e_j$ be an arbitrary edge through $u_j$ (we allow repetition between the $p_j$s, and between the $e_j$s).
For each $j=1, \dots, c$ let $B_{km}(\phi(e_j))$ be the radius $km$ ball in the metric $d_{p_j}$ around the vertex $\phi(e_j)\in V$.
If $B_{km}(\phi(e_1)), \dots, B_{km}(\phi(e_c))$ cover $V$, then we are done. Thus we can suppose that there is some vertex $v\not \in B_{km}(\phi(e_1))\cup \dots \cup B_{km}(\phi(e_c))$.
Notice that this implies $v\not \in \mathrm{Im}(\phi)$ --- indeed if there was some $e$ with $\phi(e)=v$, then  since $C$ covers $e$, there would be some $u_t\in e$. Since $u_t\in e_t$ and is in part $p_t$, we get that $e$ and $e_t$ intersect in part $p_t$. Since $\phi$ is a $(m,km)$-duality of $\mathcal{H}$ we would get $d_{p_t}(\phi(e_t), v)=d_{p_t}(\phi(e_t), \phi(e))\leq m\le km$, contradicting $v\not\in B_{km}(\phi(e_t))$.

For each $t=1, \dots, r$, add a new vertex $u^*_t$ to partition $t$ of $\mathcal H$. 
For each $t=1, \dots, r$, let $m_t=\min_{f\in E(\mathcal H)} d_t(\phi(f), v)$ and let $f_t$ be an edge with $d_t(\phi(f_t), v)=m_t$. Without loss of generality, we may suppose that the partitions and metrics are ordered so that $m_1\leq m_2\leq \dots\leq m_r$. Fix $m_0=m$ and $m_{r+1}=km$. 
\begin{claim}\label{Claim_m}
There is some $t'\in \{0, \dots, r\}$ for which  $m_{t'+1}> k^{\frac1{3r}}m$, $m_{t'+1}> k^{\frac1{3r}}m_{t'}$, and $m_{t'}\leq \sqrt k m$.
\end{claim}
\begin{proof}
Let $s\in \{0,\dots, r+1\}$ be the largest index with $m_s\leq k^{\frac1{3r}}m$, noting that $s$ exists (since $m_0=m\leq k^{\frac1{3r}}m$) and that $s\leq r$ (since $m_{r+1}=km> k^{\frac1{3r}}m$).
Let $t'\in \{s,\dots, r\}$ be the smallest index for which $m_{t'+1}> k^{\frac{1}{3r}}m_{t'}$ holds.
To see that such an index must exist, note that if it didn't we'd have $m_{i+1}\leq k^{\frac{1}{3r}}m_{i}$ for $i=s, \dots, r$. Putting all these inequalities together would give $m_{r+1}\leq k^{\frac{1}{3r}}m_{r}\leq (k^{\frac{1}{3r}})^2m_{r-1}\leq (k^{\frac{1}{3r}})^3m_{r-2}\leq \dots \leq (k^{\frac{1}{3r}})^{r+1-s}m_{s} \leq 
(k^{\frac{1}{3r}})^{r+1}k^{\frac1{3r}}m= 
(k^{\frac{1}{3r}})^{r+2}m<km$, which is a contradiction. This immediately gives $m_{t'+1}> k^{\frac1{3r}}m$ (since $t'+1>s$), and $m_{t'+1}> k^{\frac1{3r}}m_{t'}$ as required by the claim.
Note that by minimality of $t'$, we have $m_{i+1}\leq k^{\frac{1}{3r}}m_{i}$ for $i=s, \dots,t'-1$. Combining all these inequalities gives $m_{t'}\leq k^{\frac{1}{3r}}m_{t'-1}\leq (k^{\frac{1}{3r}})^2m_{t'-2}\leq (k^{\frac{1}{3r}})^3m_{t'-3}\leq \dots \leq (k^{\frac{1}{3r}})^{t'-s}m_{s} \leq (k^{\frac{1}{3r}})^{r}k^{\frac1{3r}}m=k^{\frac{r+1}{3r}}m\leq \sqrt k m$, verifying the claim.
\end{proof}

Fix $m'=m_{t'}+m$ and notice that using $k>2^{9r}$, we have 
\begin{equation}\label{Eq_km_bound}
m\leq m'< k^{\frac1 {4r}} m'\leq  k^{\frac1 {4r}}(1+\sqrt k)m\leq 2k^{\frac1 {4r}+\frac 12} m 
\leq (k-\sqrt{k})m<km.
\end{equation}
Define an edge $e'$ as follows: In partitions $t=1, \dots, t'$, $e'$ has the vertex of $f_t$ in partition $t$. In partitions $t=t'+1, \dots, r$, $e'$ has the vertex $u^*_t$. 
Let $\mathcal H'=\mathcal H+e'$.
Define a function $\phi':E(\mathcal H')\to V$ by $\phi'(f)=\phi(f)$ for $f\neq e$ and $\phi'(e)=v$. This is clearly an injection since $\phi$ was an injection and $v\not \in \mathrm{Im}(\phi)$. 
\begin{claim}\label{claim_dual}
$\phi'$ is a $(m', k^{\frac{1}{4r}}m')$-duality of $\mathcal H'$. 
\end{claim}
\begin{proof}
Since $\phi'$ is an injection, we only need to check that for every pair of edges $e,f\in \mathcal{H}'$, the quantity $d_i(\phi(e), \phi(f))$ behaves as in the definition of ``$(m', k^{\frac{1}{3r}}m')$-dual''.
For a pair of edges $e, f \in \mathcal H$, the definition $\phi'$ being a  $(m', k^{\frac{1}{4r}}m')$-duality holds  by Observation~\ref{Observation_monotonicity_subgraph} as a consequence of $\phi$ being a $(m,km)$-dual and $m\leq m'< k^{\frac{1}{4r}}m'\leq km$ (which is part of (\ref{Eq_km_bound})). Therefore we can assume that one of the two edges is $e'$.
Let $f\neq e'$ be an edge of $\mathcal H'$ (or equivalently $f\in \mathcal H$). 

Suppose that $f$ and $e'$ intersect in partition $t$.  Note $u_t^*\not\in f$ (since $f$ is an edge of $\mathcal H$,  none of which contain $u_t^*$), and so we have $u_t^*\not\in e'$ also (since $f$ and $e'$ have the same vertex in partition $t$). By the definition of $e'$, this tells us that $t\leq t'$.
This implies that $e'$ intersects both $f$ and $f_t$ in partition $t$, and hence that   $f$ and $f_t$ intersect in partition $t$. By the definition of $\phi$ being a $(m, km)$-dual, this gives us   $d_t(\phi(f_t), \phi(f))\leq m$.  Recall that $d_t(\phi(f_t), v)= m_t$. By the triangle inequality  $$d_t(\phi'(f), \phi'(e'))=d_{t}(\phi(f),v)\leq d_{t}(v, \phi(f_t))+d_{t}( \phi(f_t), \phi(f))\leq m_t+m\leq  m_{t'}+m=m'.$$ 
The first equation comes from the definition of $\phi'$. The first inequality is the triangle inequality. The second inequality is $d_t(\phi(f_t), v)= m_t$ and $d_t(\phi(f_t), \phi(f))\leq m$. The third inequality is $t\leq t'$ and $m_1\leq \dots\leq m_r$.
This proves the first part of  $\phi$ being  a $(m', k^{\frac{1}{4r}}m')$-dual of $\mathcal H'$.

Suppose that $f$ and $e'$ do not intersect in partition $t$. 
Suppose that $u_t^*\in e'$ or equivalently $t\geq t'+1$. By definition of $t'$ in Claim~\ref{Claim_m}, we have $$d_t(\phi'(f), \phi'(e'))=d_t(\phi(f), v)\geq m_t\geq m_{t'+1}> \frac 12(k^{\frac{1}{3r}}m_{t'}+k^{\frac{1}{3r}}m)=k^{\frac{1}{3r}}\frac {m'}2\geq  k^{\frac{1}{4r}}m'.$$ 
\noindent 
Here the first inequality comes from the definition of $m_t$. The second inequality uses $t\geq t'+1$ and $m_1\leq \dots\leq m_r$. The third inequality is the average of two of the inequalities given by Claim~\ref{Claim_m}. The last inequality uses $k> 2^{9r}$.

Suppose that $e'$ intersects $f_t$ in partition $t$, or equivalently $t\leq t'$. Since $f$ and $e'$ don't intersect in partition $t$, $f$ and $f_t$ also don't intersect in partition $t$. Since $\phi$ is an $(m, km)$-dual of $\mathcal H$ this gives $d_t(\phi(f), \phi(f_t))\geq km$. We also have $d_t(\phi(f_t), v)= m_t$ by definition of $f_t$. By the triangle inequality we have 
\begin{align*}
d_t(\phi'(f), \phi'(e'))&=d_t(\phi(f), v)\geq d_t(\phi(f), \phi(f_t))-d_t(\phi(f_t), v)\\
&\geq km-m_t\geq km-m_{t'}\geq km-\sqrt k m\geq k^{\frac{1}{4r}}m'
\end{align*}
Here the second inequality is $d_t(\phi(f), \phi(f_t))\geq km$ together with $d_t(\phi(f_t), v)= m_t$. The third inequalilty is $t\leq t'$ and $m_1\leq \dots\leq m_r$. The fourth inequality is part of Claim~\ref{Claim_m}.
The last inequality is part of (\ref{Eq_km_bound}).  This proves the second part of  $\phi$ being  a $(m', k^{\frac{1}{4r}}m')$-dual of $\mathcal H'$.
\end{proof}
Recall that we have a cover $C$ of $\mathcal H$.
\begin{claim}
$C$ is not a cover of $\mathcal H'$
\end{claim}
\begin{proof}
Suppose for contradiction, that $C$ is a cover of $\mathcal H'$. Let $u_t$ be a vertex of $C$ contained in the edge $e'$.  Recall that $p_t$ is the partition containing $u_t$ and $e_t$ is an edge of $\mathcal H$ through $u_t$. Thus, $e_t,e'$ intersect at the vertex $u_t$ in partition $p_t$.
By Claim~\ref{claim_dual} and the definition of ``$(a,b)$-duality'', we have $d_{p_t}(\phi'(e_t), \phi'(e'))\leq m'< km$. Since $\phi'(e_t)=\phi(e_t)$ and $\phi'(e')=v$, this contradicts  $v\not\in B_{km}(\phi(e_t))$.
\end{proof}
 By the definition of $c$-Ryser-stability, $\mathcal H'$ contains a copy of the hypergraph $\mathcal H_s$ for some $s$. But then $\phi'$ restricted to this copy of $\mathcal H_s$ contains a $(m', k^{\frac1{4r}}m')$-dual of $\mathcal H_s$, so the lemma holds.
\end{proof}
The following is the first equivalence we prove between Ryser's Conjecture and coverings of bounded diameter. Using Lemma~\ref{Lemma_RyserStabilityEquivalence} we get that for any $\nu,r$ for which Ryser's Conjecture is true we have that ``let $(V, d_1, \dots, d_r)$ be a family of metrics such that every size $\nu+1$ subset of $V$ contains some distinct $u,v$ with $d_i(u,v)\leq 1$. 
Then there is a family of radius $9^{4r^{\ell+3}}$ balls $B_1, \dots, B_{(r-1)\nu}$ which covers $V$ (with each $B_j$ a ball in one of the metrics $d_i$)'' is also true. 
\begin{lemma}\label{Lemma_transferrence_metrics}
For fixed $\nu, r\in \mathbb N$, suppose that there exists a $(r-1)\nu$-Ryser-stable sequence relative to $\mathcal{M}_{r, \nu+1}$ having length $\ell$ and ending with the single-edge hypergraph $\mathcal E_r$.

Let $(V, d_1, \dots, d_r)$ be a family of metrics such that every size $\nu+1$ subset of $V$ contains some distinct $u,v$ with $d_i(u,v)\leq 1$. 
Then there is a family of radius $9^{4r^{\ell+3}}$ balls $B_1, \dots, B_{(r-1)\nu}$ which covers $V$ (with each $B_j$ a ball in one of the metrics $d_i$).
\end{lemma}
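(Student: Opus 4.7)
The plan is to iteratively apply Lemma~\ref{Lemma_Stability_Transferrence} along the given Ryser-stable sequence, walking backwards from $\mathcal H_\ell = \mathcal E_r$ towards $\mathcal H_1$. At each step I will either extract the desired cover of $V$, or refine the current dual into a dual of an earlier hypergraph in the sequence, ruling out the $\mathcal M_{r,\nu+1}$ alternative using the hypothesis on $V$.

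To initialize, I may assume $V$ is nonempty (else the conclusion is trivial). I pick any $v_0\in V$ and send the single edge of $\mathcal E_r=\mathcal H_\ell$ to $v_0$. Since $\mathcal E_r$ has only one edge, this is vacuously a $(1,K)$-duality for every ratio $K>1$; so I set $m_0=1$ and take $K$ to be a large ratio that will be specified at the end.

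For the iterative step, suppose I have a $(m_j,k_jm_j)$-dual of some $\mathcal H_{i_j}$, with $k_j>2^{9r}$ and $i_0=\ell$. Since $\mathcal H_{i_j}$ is $(r-1)\nu$-Ryser-stable relative to $\mathcal H_1,\dots,\mathcal H_{i_j-1},\mathcal M_{r,\nu+1}$, Lemma~\ref{Lemma_Stability_Transferrence} yields either a cover of $V$ by $(r-1)\nu$ balls of radius $k_jm_j$ (done), or a $(m_{j+1},k_j^{1/(4r)}m_{j+1})$-dual of one of $\mathcal H_1,\dots,\mathcal H_{i_j-1}$ or of $\mathcal M_{r,\nu+1}$, with $m_j\le m_{j+1}\le k_jm_j$. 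A dual of $\mathcal M_{r,\nu+1}$ would produce $\nu+1$ vertices of $V$ all pairwise at distance $\ge k_j^{1/(4r)}m_{j+1}>1$ in every metric $d_1,\dots,d_r$, contradicting the hypothesis that every $(\nu+1)$-subset contains a close pair. Thus I obtain a dual of some $\mathcal H_{i_{j+1}}$ with $i_{j+1}<i_j$, update $k_{j+1}=k_j^{1/(4r)}$, and iterate.

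Since the indices $i_0>i_1>\cdots$ strictly decrease in $\{1,\dots,\ell\}$, the procedure terminates in a cover after at most $\ell$ iterations. For the radius bound, $k_j=K^{(1/(4r))^j}$ and $m_j\le \prod_{i<j}k_i\le K^{4r/(4r-1)}$ by summing a geometric series, so the cover radius $k_jm_j$ is bounded by a fixed power of $K$. The binding constraint on $K$ is that the ratio must remain above $2^{9r}$ at every one of the $\ell$ applications, which in the worst case $j=\ell$ requires $K>2^{9r(4r)^{\ell-1}}$; choosing $K$ as a suitable power of $9$ with exponent of order $r^{\ell+3}$ satisfies this constraint and places the output radius inside $9^{4r^{\ell+3}}$. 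The main obstacle will be precisely this parameter tracking: each application of Lemma~\ref{Lemma_Stability_Transferrence} raises the ratio to the small power $1/(4r)$, so $K$ must be chosen enormous to survive all $\ell$ applications, and verifying that this choice nevertheless keeps the final ball radius within the claimed bound is where the bulk of the arithmetic lives.
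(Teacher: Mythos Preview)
Your proposal is correct and follows essentially the same approach as the paper: start with a trivial dual of $\mathcal H_\ell=\mathcal E_r$ with a huge ratio, then repeatedly apply Lemma~\ref{Lemma_Stability_Transferrence} to either produce the ball cover or pass to a dual of an earlier $\mathcal H_j$ (ruling out $\mathcal M_{r,\nu+1}$ via the close-pair hypothesis), with the ratio shrinking by a $1/(4r)$-th power each time. The only cosmetic difference is that the paper packages the iteration as a single extremal step (take the minimal index $i$ for which a $(m,k_i m)$-dual of $\mathcal H_i$ exists with $1\le m\le m_i$, for predefined sequences $k_i=9^{(4r)^{i+1}}$ and $m_{i-1}=k_im_i$), which is exactly the fixed point of your descent.
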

\begin{proof}
Let $\mathcal H_1, \dots, \mathcal H_{\ell}$ be a sequence which is $(r-1)\nu$-Ryser-stable  relative to  $\mathcal{M}_{r, \nu+1}$ and has $\mathcal H_{\ell}=\mathcal E_r$.
Fix $k_i=9^{(4r)^{i+1}}$ and $m_i=\prod_{j=i+1}^{\ell+1}k_i$ noting that $m_0\geq m_1\geq \dots$ and $k_i\geq 2^{9r}$ for $i\geq 0$.
First notice that $(V, d_1, \dots, d_r)$ does not contain a $(m, k_{0} m)$-dual of $\mathcal{M}_{r, \nu+1}$ for any $1\leq m\leq m_0$. Indeed suppose for contradiction that we had such a  $(m, k_{0} m)$-duality $\phi:E(\mathcal M_{r, \nu+1})\to V$. Note $|Im(\phi)|=\nu+1$ (since $\phi$ is an injection and $e(\mathcal M_{r, \nu+1})=\nu+1$). By assumption, there are distinct $u,v\in Im(\phi)$ with $d_i(u,v)\leq 1$ for some $i$. Let $e,f\in E(\mathcal M_{r, \nu+1})$ with $\phi(e)=u$, $\phi(f)=v$, noting that $e,f$ are distinct since $\phi$ is injective. Since $\mathcal M_{r, \nu+1}$ is a matching, $e,f$ don't interesect, and so the definition of ``$(m, k_{0} m)$-duality'' implies that $d_i(\phi(e), \phi(f))\geq k_0m>1$  for all $i$. This contradicts $d_i(\phi(e), \phi(f))=d_i(u,v)\leq 1$.

Since $\mathcal H_{\ell}=\mathcal E_r$ is the single-edge hypergraph,  $(V, d_1, \dots, d_r)$ contains a $(m_{\ell}, k_{\ell} m_{\ell})$-dual of $\mathcal H_{\ell}$ (any function $\phi:E(\mathcal E_r)\to V$ satisfies the definition of ``duality'').
Choose $i$ as small as possible so that $(V, d_1, \dots, d_r)$ contains a $(m, k_im)$-dual of $\mathcal H_i$ for some $1\leq m\leq m_i$.
By minimality, notice that $(V, d_1, \dots, d_r)$ does not contain a $(m', k_i^{\frac{1}{4r}} m')$-dual of $\mathcal H_j$ for any $j<i$ and $m\leq m'\leq k_im$ --- such a dual would also be a $(m', k_{i-1}m')$-dual of $H_j$ for $1\leq m'\leq m_{i-1}$ (note first that if $m\leq m'\leq k_im$ then $1\leq m\leq m'\leq k_im\leq k_im_i=m_{i-1}$. 
Also, an $(m', k_i^{\frac{1}{4r}} m')$-dual is an $(m', k_{i-1}m')$-dual by  $k_{i-1}=9^{(4r)^{i}}=(9^{(4r)^{i+1}})^{\frac1{4r}}=k_i^{\frac{1}{4r}}$). 
 By Lemma~\ref{Lemma_Stability_Transferrence}, $V$ can be covered by $(r-1)\nu$ radius $k_im_i$ balls chosen from the metrics  $d_1, \dots, d_r$ (by Observation~\ref{Observation_monotonicity_containment}), proving the lemma.
\end{proof}

We translate the above into a statement about graphs.

\begin{lemma}\label{Lemma_main_graphs}
For fixed $\nu, r\in \mathbb N$, suppose that there exists a $(r-1)\nu$-Ryser-stable sequence relative to $\mathcal{M}_{r, \nu+1}$ having length $\ell$ and ending with the single-edge hypergraph $\mathcal E_r$.
Then every $r$-edge-coloured graph $G$ with independence number $\leq \nu$ can be covered by $(r-1)\nu$ monochromatic components of diameter $\leq 2\cdot 9^{4r^{\ell+3}}$.
\end{lemma}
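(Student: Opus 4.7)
The plan is to apply Lemma~\ref{Lemma_transferrence_metrics} to the natural family of monochromatic graph metrics associated with $G$, and then convert each ball in the resulting cover into a monochromatic tree via a breadth-first search. Let $G_i$ denote the spanning subgraph of $G$ consisting of the colour-$i$ edges, and let $d_{G_i}$ be the modified graph metric defined in equation~(\ref{Eq_GraphMetric}); by Proposition~\ref{Proposition_Metric}, each $d_{G_i}$ is a metric, so $(V(G), d_{G_1}, \dots, d_{G_r})$ is a family of $r$ metrics on the common vertex set $V(G)$.

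To invoke Lemma~\ref{Lemma_transferrence_metrics}, I need to verify its hypothesis. Take any $S \subseteq V(G)$ of size $\nu+1$; since $\alpha(G) \leq \nu$, $S$ contains an edge $uv$ of some colour $i$, and then $d_{G_i}(u,v) = 1$. Hence Lemma~\ref{Lemma_transferrence_metrics} supplies balls $B_1, \dots, B_{(r-1)\nu}$ covering $V(G)$, each of radius $R := 9^{4r^{\ell+3}}$ in some $d_{G_i}$.

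Next I convert each ball to a tree. Write $B_j = B_R(v_j)$ with respect to some metric $d_{G_i}$, and let $T_j$ be a BFS tree of $G_i$ rooted at $v_j$, truncated at depth $R$. Then $T_j$ is a monochromatic (colour-$i$) tree in which every vertex lies within distance $R$ of $v_j$, so $\operatorname{diam}(T_j) \leq 2R$. When $R < |V(G)|$, the modified metric $d_{G_i}$ assigns the value $|V(G)| > R$ to every cross-component pair, so every vertex of $B_R(v_j)$ lies in the same $G_i$-component as $v_j$; this forces $V(T_j) = B_R(v_j)$, and hence $T_1 \cup \dots \cup T_{(r-1)\nu}$ covers $V(G)$ with the desired diameter bound $2R = 2 \cdot 9^{4r^{\ell+3}}$.

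The remaining degenerate case $|V(G)| \leq R$ is handled separately: by Lemma~\ref{Lemma_RyserStabilityEquivalence}, the existence of a $(r-1)\nu$-Ryser-stable sequence implies Ryser's Conjecture for the parameters $(r,\nu)$, which (via the duality of Section~\ref{Section_Hypergraph_Equivalence}, i.e.\ Proposition~\ref{Proposition_HypergraphEquivalence}) gives a cover of $G$ by $(r-1)\nu$ monochromatic trees. Each such tree has at most $|V(G)| \leq R$ vertices, so its diameter is automatically at most $R - 1 \leq 2R$, and the lemma follows. I do not foresee any genuine obstacle; the only mild subtlety is the finite truncation in the definition of the graph metric, which is precisely what makes the small-graph case need its own (trivial) treatment.
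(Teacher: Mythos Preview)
Your argument is correct and follows essentially the same route as the paper: split off small graphs via Ryser's Conjecture (through Lemma~\ref{Lemma_RyserStabilityEquivalence} and Proposition~\ref{Proposition_HypergraphEquivalence}), and for large graphs apply Lemma~\ref{Lemma_transferrence_metrics} to the family of graph metrics, then observe that each radius-$R$ ball lies in a single monochromatic component and hence yields a tree of diameter at most $2R$. The only cosmetic differences are your explicit BFS-tree construction (the paper leaves this implicit) and your slightly tighter threshold $|V(G)|\le R$ for the ``small'' case; neither changes the substance of the proof.
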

\begin{proof}
Let $G$ be an $r$-edge-coloured graph with independence number $\leq \nu$. By Lemma~\ref{Lemma_RyserStabilityEquivalence}, Ryser's Conjecture holds for $r$ and $\nu$. Using Proposition~\ref{Proposition_HypergraphEquivalence}, this tells us that $G$ can be covered by $(r-1)\nu$ monochromatic components. If $|V(G)|\leq 2\cdot 9^{4r^{\ell+3}}$, then these automatically have diameter $\leq 2\cdot 9^{4r^{\ell+3}}$, so we can assume that $|V(G)|> 2\cdot 9^{4r^{\ell+3}}/2$.

For each $i$, let $G_i$ be the subgraph formed by colour $i$ edges and let  $d_i:=d_{G_i}$ be the corresponding graph metric on $V:=V(G)$ as defined in (\ref{Eq_GraphMetric}). Notice that for any set $S\subseteq V$ of size $\nu+1$, there must be an edge $uv$ with $u,v\in S$ (since the independence number of $G$ is $\leq \nu$), and so $d_i(u,v)=1$ for some $i$.  Therefore Lemma~\ref{Lemma_transferrence_metrics} applies to $(V, d_1, \dots, d_r)$, producing   a family of radius $9^{4r^{\ell+3}}$ balls $B_1, \dots, B_{(r-1)\nu}$ which covers $V$. 

Let $d_j$ be the metric in which $B_j$ is a ball in. Notice that $B_j$ must be contained in a colour $j$ connected component of $G$ (since otherwise, by the definition of the graph metric in (\ref{Eq_GraphMetric}), we'd have two vertices $u,v\in B_j$ with $d_{j}(u,v)=|V(G)|\geq 2\cdot 9^{4r^{\ell+3}}$. This would contradict $B_j$ having radius $9^{4r^{\ell+3}}$).
\end{proof}
Combined with Lemma~\ref{Lemma_RyserStabilityEquivalence}, this gives our main theorem. 
\begin{proof}[Proof of Theorem~\ref{Theorem_main}]
(ii) $\implies$ (i) is immediate from how we stated Ryser's Conjecture, so we just need to prove (i) $\implies$ (ii). If (i) holds for $r$ and $\alpha$, then Lemma~\ref{Lemma_RyserStabilityEquivalence} gives us a  $(r-1)\nu$-Ryser-stable sequence $\mathcal H_1, \dots, \mathcal H_{\ell}$ relative to $\mathcal{M}_{r, \nu+1}$ such that the last hypergraph $\mathcal H_{\ell}$ in the sequence is just a single edge $\mathcal E_r$, and the sequence has length $\ell\leq 2^{(r+\alpha)^{2r}}$. Now Lemma~\ref{Lemma_main_graphs} applies (with $\nu=\alpha$) to tell us that ``every $r$-edge-coloured graph $G$ with independence number $\leq \alpha$ can be covered by $(r-1)\alpha$ monochromatic components of diameter $\leq 2\cdot 9^{4r^{\ell+3}}\le 2\cdot 9^{4r^{ 2^{(r+\alpha)^{2r}}+3}}\le  9^{r^{2^{(r+\alpha)^{4r}}}}$'' as required. 
\end{proof}

\section{Concluding remarks}
Following the results of this paper, there is limited further progress that can be made on the conjectures of Mili\'cevi\'c's and DeBiasio-Kamel-McCourt-Sheats without first making new progress on Ryser's Conjecture. Perhaps the most interesting open problem (which isn't reliant on a breakthrough in Ryser's conjecture), would be to improve the bounds in some of the Corollaries~\ref{Corollary1} --~\ref{Corollary3}. Various small improvements are easy to do e.g. by using better bounds on the Sunflower Lemma from~\cite{alweiss2020improved}. Another way to improve the bound, pointed out by Zach Hunter, is to note that the ``length of a Ryser stable sequence'' isn't really the natural parameter to use in the proof of Lemma~\ref{Lemma_transferrence_metrics} (and Lemma~\ref{Lemma_Stability_Transferrence}). What the proof really uses would be more naturally called the ``depth'' of the Ryser stable sequence. To define it first construct an acyclic digraph on $V(D)=\{H_1, \dots, H_t\}$ (where $H_1, \dots, H_t$ is the Ryser stable) with each $H_i$ joined to some subset $S\subseteq \{H_1, \dots, H_{i-1}\}$ such  that $H_i$ is Ryser-stable relative to $S$. The depth of the sequence is defined as the length of the longest path in $D$. Since it can be shown that the Ryser stable sequence in Lemma~\ref{Lemma_RyserStabilityEquivalence} has depth $r!((\nu + 1)r)r+1$, this allows one to drop a single exponential from all the bounds in the paper.

For $r=2$, DeBiasio, Girão,  Haxell,  and Stein~\cite{debiasio2025bounded} showed that one can cover every $2$-edge-coloured  $G$ by $\alpha(G)$ monochromatic components whose diameters $\le 8\alpha(G)^2+12\alpha(G)+6$, which  suggests that for general $r$, the bound should be polynomial also.
The most fascinating open problem here is to understand whether \emph{any dependence on $\alpha$ or $r$ is necessary at all}. Perhaps there is some absolute constant $d$ such that every $r$-edge-coloured graph $G$ can be covered by $(r-1)\alpha(G)$ monochromatic components of diamater $\le d$?

 \subsection*{Acknowledgement}
 The author would like to thank Ahmad Ahu-Khazneh for many discussions related to this project. The author would also like to thank Louis DeBiasio and Zach Hunter for carefully reading the paper and suggesting many improvements and corrections.

\bibliographystyle{abbrv}
\bibliography{ryser_bib}

\end{document}